\DeclareMathOperator{\1}{id}
\DeclareMathOperator{\Ker}{Ker}
\renewcommand{\=}{\doteq}
\newcommand{\X}{\mathfrak{X}}
\newcommand{\T}{\mathfrak{T}}
\newcommand{\E}{\mathfrak{E}}
\newtheorem{thm}{Theorem}[section]
 \newtheorem{cor}[thm]{Corollary}
 \newtheorem{prop}[thm]{Proposition}
 \newtheorem{lemma}[thm]{Lemma}
\theoremstyle{definition}
 \newtheorem{defn}[thm]{Definition}
\theoremstyle{definition}
\newtheorem{exam}[thm]{Example}
\theoremstyle{definition}
 \newtheorem{rem}[thm]{Remark}
\numberwithin{equation}{section}
\numberwithin{equation}{section}
\begin{document}
\title{\bf  Moufang symmetry VII.\\
Moufang transformations}
\author{Eugen Paal}
\date{}
\maketitle
\thispagestyle{empty}
\begin{abstract}
Concept of a birepresentation for the Moufang loops is elaborated.
\par\smallskip
{\bf 2000 MSC:} 20N05
\end{abstract}

\section{Introduction}

Groups are often said to be an algebraic abstraction of the notion of symmetry. As a slight generalization of this one can introduce the notion of the Moufang symmetry. The latter can be defined as a hypothetic kind of symmetry associated with the Moufang loops. By paraphrasing these words, we can also say that the Moufang loops are an algebraic abstraction of the Moufang symmetry.

By introducing such a notion of symmetry, one finds oneself confronted with a question about its real meaning. If one feels like looking at world affairs from viewpoint of the Moufang symmetry, one needs  a suitable mathematical machinery for identification of this symmetry. As in case of groups, one really has to elabotate representation theory of the Moufang loops, and this is the logical way to get an answer to the question.

In the present paper we elaborate a concept of a \emph{birepresentation} for the Moufang loops. Throughout the paper,  ideas presented in \cite{Doro} are very useful. 

\section{Moufang loops}

A \emph{Moufang loop} \cite{RM} (see also \cite{Bruck,Bel,HP}) is a set $G$ with a binary operation  (multiplication) 
$\cdot: G\times G\to G$, denoted also by juxtaposition, so that the following three axioms are satisfied:
\begin{enumerate}
\itemsep-3pt
\item[1)] 
in equation $gh=k$, the knowledge of any two of $g,h,k\in G$ specifies the third one \emph{uniquely},
\item[2)] 
there is a distinguished element $e\in G$ with the property $eg=ge=g$ for all $g\in G$,
\item[3)] 
the \emph{Moufang identity} 
\begin{equation}
\label{Moufang}
(gh)(kg) = g(hk)g
\end{equation}
hold in $G$.
\end{enumerate}
Recall that a set with a binary operation is called a \emph{groupoid}. A groupoid $G$ with axiom 1) is called a \emph{quasigroup}. If axioms 1) and 2) are satisfied, the grupoid (quasigroup) $G$ is called a \emph{loop}. The element $e$ in axiom 2) is called the \emph{unit} (element) of the (Moufang) loop $G$.

In a (Moufang) loop, multiplication need not be neither associative nor commutative. Associative (Moufang) loops are well known and called \emph{groups}. The \emph{associativity} and \emph{commutativity} laws read, respectively,
\begin{equation*}
g\cdot hk=gh\cdot k,\quad gh=hg, \qquad \forall g,h,k\in G
\end{equation*}
The most familiar kind of loops are those with the \emph{associative} law, and these are called 
\emph{groups}. A (Moufang) loop $G$ is called \emph{commutative} if the commutativity law holds in $G$, and (only) the commutative associative (Moufang) loops are said to be \emph{Abelian}.

The most remarkable property of the Moufang loops is their \emph{diassociativity}: in a Moufang loop $G$ every two elements generate an associative subloop (group) \cite{RM}. In particular, from this it follows that
\begin{equation} 
\label{alt}
g\cdot gh=g^{2}h,\quad 
hg\cdot g=hg^{2},\quad 
gh\cdot g=g\cdot hg,\qquad \forall g,h\in G
\end{equation}
The first and second identities in (\ref{alt}) are called the left and right \emph{alternativity}, respectively, and the third one is said to be \emph{flexibility}. 

The unique solution of equation $xg=e$ ($gx=e$) is called the left (right) \emph{inverse} element of $g\in G$ and is denoted as $g^{-1}_{R}$ ($g^{-1}_{L}$). It follows from diassociativity of the Moufang loop that 
\begin{subequations}
\label{inverses}
\begin{gather}
g^{-1}_{R}=g^{-1}_{L}\doteq  g^{-1}\\
g^{-1}\cdot gh=hg\cdot g^{-1}\\
\left(g^{-1}\right)^{-1}=g\\
(gh)^{-1}=h^{-1}g^{-1},\qquad \forall g,h\in G
\end{gather}
\end{subequations}

\section{Moufang transformations}

Let $\X$ be a set and let $\T(\X)$ denote the transformation group of $X$. Elements of $\X$ are called \emph{transformations} of $\X$. Multiplication in $\T(\X)$ is defined as composition of transformations, and  unit element of $\T(\X)$ coincides with the  identity transformation $\1$ of $X$.

Let $G$ be a Moufang loop with the unit element $e\in G$ and let $(S,T)$ denote a pair of maps $S,T:G\to\T(\X)$. 
\begin{defn}[birepresentation]
The pair $(S,T)$ is said to be an \emph{action} of $G$ on $\X$ if
\begin{subequations}
\label{bir_def}
\begin{gather}
S_e=T_e=\1\\
S_gT_gS_h=S_{gh}T_g\\
S_gT_gT_h=T_{hg}S_g
\end{gather}
hold for all $g,h$ in $G$.
\end{subequations}
The pair $(S,T)$ is called also a \emph{birepresentation} of $G$ (in $\T(\X)$). Transformations $S_g,T_g\in\T(\X)$ ($g\in G$) are called $G$-transformations or the \emph{Moufang transformations} of $\X$. The set of all Moufang transformations is denoted as $\E_G(S,T)$.
\end{defn}

\begin{exam}
Define the left ($L$) and right ($R$) translations of $G$ by $gh=L_gh=R_hg$.
Then it follows from the (\ref{Moufang}) that the pair $(L,R)$ of maps $L_g,R_g:G\to \T(G)$ is a birepresentation of $G$ in $\T(G)$.
\end{exam}

The Moufang transformations need not close, but generate a subgroup of $\T(\X)$. This subgroup is called an 
\emph{enveloping group} of $\E_G(S,T)$ and is denoted as $\overline{\E_G(S,T)}$. In other words, the Moufang transformations are generators of group $\overline{\E_G(S,T)}$ -- the enveloping group of birepresentation $(S,T)$ of $G$. The defining relations of $\overline{\E_G(S,T)}$ are (\ref{bir_def}a--c). The enveloping group $\overline{\E_G(S,T)}$  can be called the \emph{multiplication group} of the birepresentation $(S,T)$ as well.

\begin{defn}[kernel]
The set
\begin{equation*}
K\=\Ker(S,T)\=\{g\in G|\, S_g=T_g=\1\}
\end{equation*}
is called the \emph{kernel} of birepresentation $(S,T)$. If $K=\{e\}$, then birepresentation $(S,T)$ is called  \emph{faithful} and action of the Moufang loop $G$ on $\X$ is called \emph{effective}.
\end{defn}

\begin{exam}
Birepresentation $(L,R)$ is exact.
\end{exam}

\section{Properties of Moufang transformations}

\begin{prop}
We have
\begin{equation}
\label{ST_comm}
S_g T_g=T_g S_g,\quad \forall g\in G
\end{equation}
\end{prop}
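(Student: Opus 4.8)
The plan is to obtain the commutation relation by specializing the defining relations at the unit element $e\in G$. The key observation is that among the three axioms (\ref{bir_def}a--c), it is the third one that directly encodes the desired commutativity once $h$ is set equal to $e$; none of the deeper structure of the Moufang loop (the Moufang identity, diassociativity, or the inverse formulas) is required.

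First I would substitute $h=e$ into (\ref{bir_def}c), namely into $S_gT_gT_h=T_{hg}S_g$. This yields $S_gT_gT_e=T_{eg}S_g$. Applying $T_e=\1$ from (\ref{bir_def}a) on the left-hand side, and the unit property $eg=g$ from axiom~2) of the Moufang loop on the right-hand side, both simplifications are immediate, and the identity collapses to $S_gT_g=T_gS_g$, which is exactly (\ref{ST_comm}).

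For completeness I would also record that the analogous substitution $h=e$ in (\ref{bir_def}b) produces only the tautology $S_gT_g=S_gT_g$ (using $S_e=\1$ together with $ge=g$), so it carries no information; the entire content of the proposition resides in (\ref{bir_def}c). There is essentially no obstacle here: the argument is a single specialization, and the only genuine decision is recognizing that (\ref{bir_def}c), rather than (\ref{bir_def}b), is the relation that delivers the commutativity of $S_g$ and $T_g$.
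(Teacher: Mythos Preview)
Your proof is correct and matches the paper's own argument exactly: the paper's proof consists of the single instruction to set $h$ equal to the unit in (\ref{bir_def}c). Your additional remark that the same substitution in (\ref{bir_def}b) yields only a tautology is accurate and a nice clarification, though the paper does not mention it.
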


\begin{proof}
Set $h=u$ in (\ref{bir_def}c)
\end{proof}

\begin{prop}
We have
\begin{equation}
\label{ST_inverses}
S_{g^{-1}}=S^{-1}_g,\quad T_{g^{-1}}=T^{-1}_g, \quad \forall g\in G
\end{equation}
\end{prop}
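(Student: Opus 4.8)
The plan is to exploit the two multiplicative relations (\ref{bir_def}b) and (\ref{bir_def}c) by specializing the free index $h$ to $g^{-1}$, and then to cancel the invertible transformations $S_g$ and $T_g$. This cancellation is legitimate precisely because every element of $\T(\X)$ is a bijection, hence invertible in the group $\T(\X)$.

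First I would handle $S$. Setting $h=g^{-1}$ in (\ref{bir_def}b) and using $gg^{-1}=e$ together with $S_e=\1$ gives $S_gT_gS_{g^{-1}}=T_g$. Invoking the commutativity (\ref{ST_comm}) rewrites the left-hand side as $T_gS_gS_{g^{-1}}$, and cancelling the invertible $T_g$ on the left yields $S_gS_{g^{-1}}=\1$. To obtain the reverse composite I would replace $g$ by $g^{-1}$ and use $\left(g^{-1}\right)^{-1}=g$ from (\ref{inverses}c), which produces $S_{g^{-1}}S_g=\1$. The two one-sided identities together say $S_{g^{-1}}=S_g^{-1}$.

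The argument for $T$ is entirely parallel, now starting from (\ref{bir_def}c). Setting $h=g^{-1}$ there and using $g^{-1}g=e$ with $T_e=\1$ gives $S_gT_gT_{g^{-1}}=S_g$; cancelling the invertible $S_g$ on the left leaves $T_gT_{g^{-1}}=\1$, and the substitution $g\mapsto g^{-1}$ supplies $T_{g^{-1}}T_g=\1$, whence $T_{g^{-1}}=T_g^{-1}$.

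There is no serious obstacle here: the computation is short and purely formal. The only points that require attention are to invoke (\ref{ST_comm}) at the right moment, so that the factor one wishes to cancel sits on the outside of the composite, and to remember that a single specialization yields only a \emph{one-sided} inverse. Upgrading each relation to a genuine two-sided inverse is exactly what the substitution $g\mapsto g^{-1}$, justified by $\left(g^{-1}\right)^{-1}=g$, accomplishes.
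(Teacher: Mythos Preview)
Your proof is correct and follows essentially the same route as the paper's: specialize the defining relations (\ref{bir_def}b,c) at $h=g^{-1}$, use (\ref{ST_comm}) to move the factor to be cancelled outward in the $S$-case, and then obtain the opposite one-sided identity by a symmetric substitution. The only cosmetic difference is that the paper derives the second one-sided identity by setting $g=h^{-1}$ directly in (\ref{bir_def}b,c), whereas you substitute $g\mapsto g^{-1}$ into the already-derived relation $S_gS_{g^{-1}}=\1$; these are equivalent manoeuvres.
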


\begin{proof}
In (\ref{bir_def}b,c) first set $h=g^{-1}$:
\begin{equation*}
S_gS_{g^{-1}}=T_gT_{g^{-1}}=\1
\end{equation*}
Analogously, setting in (\ref{bir_def}b,c) $g=h^{-1}$, we have
\begin{equation*}
S_{h^{-1}}S_h=T_{h^{-1}}T_h=\1
\end{equation*}
Thus
\begin{equation*}
S_gS_{g^{-1}}=S_{g^{-1}}S_g=\1,\quad
T_gT_{g^{-1}}=T_{g^{-1}}T_g=\1,\quad \forall g\in G
\tag*{\qed}
\end{equation*}
\renewcommand{\qed}{}
\end{proof}

\begin{lemma}
The defining relations of the Moufang transformations can equivalently be written as follows:
\begin{subequations}
\label{bir_def2}
\begin{gather}
S_e=T_e=\1\\
S_hT_gS_g=T_gS_{hg}\\
T_hT_gS_g=S_gT_{gh}
\end{gather}
\end{subequations}
for all $g,h$ in $G$.
\end{lemma}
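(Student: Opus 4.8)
The plan is to recognise that the system (\ref{bir_def2}) is nothing but the image of the system (\ref{bir_def}) under the group inversion in $\T(\X)$, combined with the loop inversion $g\mapsto g^{-1}$ in $G$. Since (\ref{bir_def2}a) coincides verbatim with (\ref{bir_def}a), only the relations (b) and (c) require attention, and it suffices to show that each of them transforms into its counterpart.

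First I would take the inverse of both sides of (\ref{bir_def}b). Because inversion is an anti-automorphism of $\T(\X)$ (it reverses the order of a product and inverts each factor), the left-hand side $S_gT_gS_h$ becomes $S_h^{-1}T_g^{-1}S_g^{-1}$ and the right-hand side $S_{gh}T_g$ becomes $T_g^{-1}S_{gh}^{-1}$. Now I invoke (\ref{ST_inverses}) to replace each $X^{-1}$ by $X_{(\cdot)^{-1}}$, and the loop identity $(gh)^{-1}=h^{-1}g^{-1}$ from (\ref{inverses}) to rewrite $S_{(gh)^{-1}}=S_{h^{-1}g^{-1}}$. This yields
\begin{equation*}
S_{h^{-1}}T_{g^{-1}}S_{g^{-1}}=T_{g^{-1}}S_{h^{-1}g^{-1}}.
\end{equation*}
Relabelling $h^{-1}\to h$ and $g^{-1}\to g$ — which is legitimate since $g\mapsto g^{-1}$ is a bijection of $G$ by (\ref{inverses}c) — turns this into exactly (\ref{bir_def2}b).

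The relation (\ref{bir_def2}c) is obtained from (\ref{bir_def}c) in the same manner: inverting $S_gT_gT_h=T_{hg}S_g$ gives $T_h^{-1}T_g^{-1}S_g^{-1}=S_g^{-1}T_{hg}^{-1}$, and after applying (\ref{ST_inverses}), the identity $(hg)^{-1}=g^{-1}h^{-1}$, and the same relabelling, one arrives at (\ref{bir_def2}c). Note that no appeal to commutativity (\ref{ST_comm}) is needed; the argument rests solely on inversion being an anti-automorphism of $\T(\X)$ and on $S,T$ sending loop inverses to transformation inverses.

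For the equivalence I would finally stress that every step above is reversible: group inversion is an involution, $g\mapsto g^{-1}$ is its own inverse on $G$, and the relations (\ref{ST_inverses}) follow equally from (\ref{bir_def2}) by setting $h=g^{-1}$. Hence applying the identical procedure to (\ref{bir_def2}b,c) recovers (\ref{bir_def}b,c), so the two systems have the same solutions $(S,T)$. The only point demanding care — the main obstacle — is the bookkeeping of indices under the order-reversal: one must track that $(gh)^{-1}=h^{-1}g^{-1}$ (rather than $g^{-1}h^{-1}$) so that the resulting subscript lands in the position matching (\ref{bir_def2}b), and dually $(hg)^{-1}=g^{-1}h^{-1}$ for (c).
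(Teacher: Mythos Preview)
Your proof is correct and follows essentially the same route as the paper: take inverses of both sides of (\ref{bir_def}b,c), apply (\ref{ST_inverses}) and the loop identity $(gh)^{-1}=h^{-1}g^{-1}$, then relabel $g^{-1}\to g$, $h^{-1}\to h$. Your version is in fact more complete, since you also address the reverse implication needed for the word ``equivalently'' (noting that (\ref{ST_inverses}) can equally be derived from (\ref{bir_def2})), whereas the paper only writes out the forward direction.
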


\begin{proof}
It follows from (\ref{bir_def}b) and (\ref{ST_inverses}) that
\begin{equation*}
S^{-1}_h T^{-1}_g S^{-1}_g 
= T^{-1}_g S^{-1}_{gh}
\end{equation*}
which implies
\begin{equation*}
S_{h^{-1}} T_{g^{-1}} S_{g^{-1}} 
= T_{g^{-1}} S_{(gh)^{-1}}
= T_{g^{-1}} S_{h^{-1}g^{-1}}
\end{equation*}
Thus, replacing $g^{-1}\to g$ and $h^{-1}\to h$  we obtain (\ref{bir_def2}b). Analogously (\ref{bir_def2}c) can be checked.
\end{proof}

\begin{thm}
The Moufang transformations satisfy the following relation:
\begin{equation}
\label{ST_comm2}
S_gS_hT_hT_g=T_hT_gS_gS_h,\quad \forall g,h\in G
\end{equation}
\end{thm}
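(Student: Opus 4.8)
The plan is to show that both sides of (\ref{ST_comm2}) collapse to one and the same product, namely $S_g T_{gh} S_h$, so that the asserted commutation reduces to a triviality. The two directions use the two equivalent forms of the cubic defining relation already available: relation (\ref{bir_def}c) for the left-hand side, and its mirror (\ref{bir_def2}c) from the Lemma for the right-hand side. No further identities (not even (\ref{ST_comm}) or (\ref{ST_inverses})) should be needed, since the Lemma has already repackaged the defining relations into exactly the convenient shapes.

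First I would attack the left-hand side $S_gS_hT_hT_g$. Leaving the leading factor $S_g$ untouched, I would apply (\ref{bir_def}c) to the trailing three factors $S_hT_hT_g$; relabelling the indices in $S_gT_gT_h=T_{hg}S_g$ by the substitution $g\leftrightarrow h$ gives $S_hT_hT_g=T_{gh}S_h$, whence $S_gS_hT_hT_g=S_gT_{gh}S_h$.

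Next I would treat the right-hand side $T_hT_gS_gS_h$ symmetrically. Leaving the trailing factor $S_h$ aside, I would apply the Lemma's relation (\ref{bir_def2}c), that is $T_hT_gS_g=S_gT_{gh}$, to the leading three factors, obtaining $T_hT_gS_gS_h=S_gT_{gh}S_h$. Comparing the two computations yields (\ref{ST_comm2}) at once.

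The only point requiring care is the bookkeeping of indices: one must recognise that it is precisely the single factor $S_g$ on the left and the single factor $S_h$ on the right that have to be carried along unchanged, so that the remaining triple in each case matches the subscript pattern of (\ref{bir_def}c) and (\ref{bir_def2}c) exactly. I expect this index-matching to be the only mild subtlety; there is no genuine obstacle beyond it, because the equivalence of (\ref{bir_def2}) with (\ref{bir_def}), which supplies the ``reversed'' relation needed for the right-hand side, is already established in the Lemma.
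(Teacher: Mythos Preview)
Your argument is correct. Both sides indeed reduce to $S_gT_{gh}S_h$: the left via the swapped form of (\ref{bir_def}c), the right via (\ref{bir_def2}c) from the Lemma, and no appeal to (\ref{ST_comm}) or (\ref{ST_inverses}) is required.

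The paper proceeds a little differently. It works with the $(b)$-relations rather than the $(c)$-relations: from (\ref{bir_def}b) one has $S_gT_gS_hT_g^{-1}=S_{gh}$, and from the swapped form of (\ref{bir_def2}b) one has $T_h^{-1}S_gT_hS_h=S_{gh}$; equating these gives $S_gT_gS_hT_g^{-1}=T_h^{-1}S_gT_hS_h$, i.e.\ $T_hS_gT_gS_h=S_gT_hS_hT_g$, and then one still needs (\ref{ST_comm}) twice (for $S_gT_g=T_gS_g$ and $T_hS_h=S_hT_h$) to reach (\ref{ST_comm2}). Your choice of the $(c)$-relations is the natural one here, because the blocks $S_hT_hT_g$ and $T_hT_gS_g$ already sit inside the two sides of (\ref{ST_comm2}) without any rearrangement, so the proof closes in one step on each side instead of requiring the additional commutation. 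The two routes are close cousins---each pairs one defining relation with its Lemma mirror---but yours is the more economical of the two.
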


\begin{proof}
In (\ref{bir_def}b) interchange $g$ and $h$ to obtain 
\begin{equation*}
S_hT_gS_g=T_gS_{hg}
\end{equation*}
and comparing the resulting formula with (\ref{bir_def}b) we get
\begin{equation*}
S_g T_g S_h T^{-1}_g 
=T^{-1}_h S_g T_ h S_h
\end{equation*}
which implies the the desired relation.
\end{proof}

\begin{lemma}
\label{bir_def3}
The defining relations of the Moufang transformations satisfy the following relations:
\begin{align*}
S_{g^{-1}h}&=T^{-1}_g S^{-1}_g S_h T_g\\
T_{g^{-1}h}&=S_g T_h  T^{-1}_g S^{-1}_g\\
S_{hg^{-1}}&=T^{-1}_g S^{-1}_y S^{-1}_g T^{-1}_g\\
S_{hg^{-1}}&=S^{-1}_g T^{-1}_g  T_h S_g
\end{align*}
for all $g,h$ in $G$.
\end{lemma}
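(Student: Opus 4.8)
The plan is to read off each of the four formulas from one of the defining relations---either the original system (\ref{bir_def}) or its equivalent form (\ref{bir_def2}) established earlier---by isolating the relevant translation, substituting $g\mapsto g^{-1}$, and rewriting $S_{g^{-1}},T_{g^{-1}}$ through the inverse relations (\ref{ST_inverses}). Where the factors come out in the wrong order, a single application of the commutation relation (\ref{ST_comm}) puts them right. Nothing beyond these three already-proved facts is needed.

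Concretely, for the first formula I would solve (\ref{bir_def}b) for $S_{gh}=S_gT_gS_hT^{-1}_g$, replace $g$ by $g^{-1}$, and use $S_{g^{-1}}=S^{-1}_g$, $T_{g^{-1}}=T^{-1}_g$ together with $T_{g^{-1}}^{-1}=T_g$; this gives $S_{g^{-1}h}=S^{-1}_gT^{-1}_gS_hT_g$, and commuting $S^{-1}_gT^{-1}_g$ into $T^{-1}_gS^{-1}_g$ by (\ref{ST_comm}) yields the stated expression. The second formula is obtained in exactly the same manner from (\ref{bir_def2}c): solving for $T_{gh}=S^{-1}_gT_hT_gS_g$ and substituting $g\mapsto g^{-1}$ produces $T_{g^{-1}h}=S_gT_hT^{-1}_gS^{-1}_g$ with no reordering required.

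The two formulas involving the right quotient $hg^{-1}$ are treated by the partner relations in which $h$ stands to the left of the product: here I would isolate $S_{hg}=T^{-1}_gS_hT_gS_g$ from (\ref{bir_def2}b) and $T_{hg}=S_gT_gT_hS^{-1}_g$ from (\ref{bir_def}c), then again send $g\mapsto g^{-1}$ and apply (\ref{ST_inverses}). The key bookkeeping point throughout is that $S_{g^{-1}}^{-1}=S_g$ and $T_{g^{-1}}^{-1}=T_g$, which is precisely what converts the $g^{-1}$-indexed translations back into the ordinary inverses $S^{-1}_g,T^{-1}_g$ appearing on the right-hand sides.

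I do not expect any genuine obstacle: every line is a rearrangement of a relation already in hand, and the only delicate point is the order of the factors, which is governed entirely by (\ref{ST_comm}). I would, however, flag that the four left-hand sides ought to read $S_{g^{-1}h}$, $T_{g^{-1}h}$, $S_{hg^{-1}}$, $T_{hg^{-1}}$---one per defining relation---so that the repeated $S_{hg^{-1}}$ and the stray subscript in the third line are evidently typographical; the derivation above pins down which relation supplies each formula and thereby recovers the intended right-hand sides.
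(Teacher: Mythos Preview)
The paper states this lemma without proof, so there is no argument to compare against. Your plan is correct and is exactly the natural one: each of the four identities is obtained by isolating $S_{gh}$, $T_{gh}$, $S_{hg}$, $T_{hg}$ from the corresponding relation in (\ref{bir_def}b,c) or (\ref{bir_def2}b,c), substituting $g\mapsto g^{-1}$, and rewriting via (\ref{ST_inverses}), with (\ref{ST_comm}) used once when the leading pair $S^{-1}_gT^{-1}_g$ needs to be swapped. Your diagnosis of the typographical errors is also on target: the intended left-hand sides are $S_{g^{-1}h}$, $T_{g^{-1}h}$, $S_{hg^{-1}}$, $T_{hg^{-1}}$ (one per defining relation), and comparison with the later formulae (\ref{bir_def4})--(\ref{bir_def5}), where $P_g=S^{-1}_gT^{-1}_g$, confirms the right-hand sides you recover; in particular the garbled third line should read $S_{hg^{-1}}=T_gS_hS^{-1}_gT^{-1}_g$.
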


\begin{thm}
\label{ST_properties}
We have:
\begin{enumerate}
\itemsep-3pt
\item[1)]
$\Ker(S,T)$ is a subloop of the Moufang loop $G$,
\item[2)]
$S_g=S_h$ ($T_g=T_h$) iff and only if $S_{g^{-1}h}=\1$ ($T_{g^{-1}h}=\1$),
\item[3)]
birepresentation $(S,T)$ is faithful if and only if from $S_g=S_h$ and $T_g=T_h$ follows $g=h$.
\end{enumerate}
\end{thm}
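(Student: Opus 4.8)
The plan is to treat the three assertions in order, since each feeds the next. For part~1) I would verify the subloop requirements directly from the defining relations. Clearly $e\in K$ by (\ref{bir_def}a). For closure under inversion, if $g\in K$ then $S_g=T_g=\1$, and (\ref{ST_inverses}) gives $S_{g^{-1}}=S_g^{-1}=\1$ and $T_{g^{-1}}=T_g^{-1}=\1$, so $g^{-1}\in K$. For closure under multiplication, take $g,h\in K$: feeding $S_g=T_g=S_h=\1$ into (\ref{bir_def}b) yields $S_{gh}=\1$, while feeding $S_h=T_h=T_g=\1$ into (\ref{bir_def}c) with the roles of $g,h$ exchanged yields $T_{gh}=\1$; hence $gh\in K$. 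Since $G$ is a Moufang loop it has the inverse property, so a nonempty subset containing $e$ and closed under multiplication and inversion is automatically a subloop (the solutions $a^{-1}b$ and $ba^{-1}$ of $ax=b$ and $xa=b$ then lie in $K$). This establishes part~1).

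For part~2) the key tool is Lemma~\ref{bir_def3}, which expresses $S_{g^{-1}h}$ and $T_{g^{-1}h}$ as conjugates. I would read off $S_{g^{-1}h}=T_g^{-1}S_g^{-1}S_hT_g$: if $S_g=S_h$ the right-hand side collapses to $T_g^{-1}T_g=\1$, and conversely $S_{g^{-1}h}=\1$ forces $S_g^{-1}S_h=\1$ after conjugating back by $T_g$, i.e.\ $S_g=S_h$. The $T$-statement is identical using $T_{g^{-1}h}=S_gT_hT_g^{-1}S_g^{-1}$. So part~2) is essentially immediate once Lemma~\ref{bir_def3} is in hand.

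Finally, part~3) combines the definition of faithfulness ($K=\{e\}$) with part~2). If $(S,T)$ is faithful and $S_g=S_h$, $T_g=T_h$, then part~2) gives $S_{g^{-1}h}=T_{g^{-1}h}=\1$, so $g^{-1}h\in K=\{e\}$ and hence $h=g$. Conversely, if $S_g=S_h,\ T_g=T_h$ always forces $g=h$, then taking $h=e$ shows $S_g=T_g=\1$ implies $g=e$, i.e.\ $K=\{e\}$. The only genuinely delicate point in the whole argument is part~1): one must be careful to extract \emph{both} $S_{gh}=\1$ and $T_{gh}=\1$ from the two defining relations, since neither relation alone delivers both, and then to invoke the inverse property of the Moufang loop to upgrade closure under products and inverses to the full subloop property.
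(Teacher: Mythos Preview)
Your proof is correct and is essentially a detailed unpacking of the paper's own argument, which consists of the single line ``Use Lemma~\ref{bir_def2}.'' You verify part~1) directly from the defining relations (\ref{bir_def}) and (\ref{ST_inverses}), and for part~2) you invoke Lemma~\ref{bir_def3} (the formulas $S_{g^{-1}h}=T_g^{-1}S_g^{-1}S_hT_g$ and $T_{g^{-1}h}=S_gT_hT_g^{-1}S_g^{-1}$), which is exactly the natural tool; the paper's citation of~\ref{bir_def2} rather than~\ref{bir_def3} is most likely a slip, since the latter is what makes part~2) immediate. Your treatment of part~3) via part~2) and the definition of faithfulness is standard and matches what the paper intends. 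The only point worth flagging is cosmetic: your closing remark that ``one must be careful to extract \emph{both} $S_{gh}=\1$ and $T_{gh}=\1$'' is well taken, and your device of swapping $g,h$ in (\ref{bir_def}c) to get $T_{gh}=\1$ is clean.
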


\begin{proof}
Use Lemma \ref{bir_def2}.
\end{proof}

\section{Triality}

Define the \emph{quadratic} Moufang transformations  as
\begin{equation}
\label{P_def}
P_g\=S^{-1}_g T^{-1}_g\quad \in \overline{\E_G(S,T)},\quad g\in G
\end{equation}
Note that $P_g$ commutes both with $S_g$ and $T_g$. Thus we can equivalently define $P_g$ by the symmetric relation
\begin{equation}
\label{STP_def}
S_gT_gP_g\=\1,\quad g\in G
\end{equation}
\begin{prop}
We have
\begin{gather*}
P_e=\1\\
P^{-1}_g P_g =P_g P^{-1}_g = \1,\quad \forall g\in G
\end{gather*}
\end{prop}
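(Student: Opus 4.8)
The plan is to read off both assertions directly from the definition (\ref{P_def}) together with the facts already established for $S$ and $T$; no genuinely new idea is needed beyond careful bookkeeping, so I expect this to be a verification rather than a proof with a hard core.

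First I would dispose of $P_e$. Substituting $g=e$ in (\ref{P_def}) gives $P_e=S^{-1}_e T^{-1}_e$, and the normalization axiom (\ref{bir_def}a) supplies $S_e=T_e=\1$, whence $P_e=\1^{-1}\1^{-1}=\1$. This settles the first line at once. For the second line I would observe that $P_g$ is, by its very construction, an element of the enveloping group $\overline{\E_G(S,T)}$, being the product of the two invertible transformations $S^{-1}_g,T^{-1}_g\in\T(\X)$. Consequently $P_g$ is itself an invertible transformation and its inverse exists in the group, so the two equalities $P^{-1}_g P_g=P_g P^{-1}_g=\1$ are merely the group-inverse axioms applied to $P_g$. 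To make this concrete I would compute $P^{-1}_g=(S^{-1}_g T^{-1}_g)^{-1}=T_g S_g$ and verify directly that $P_g P^{-1}_g=S^{-1}_g T^{-1}_g T_g S_g=\1$ and $P^{-1}_g P_g=T_g S_g S^{-1}_g T^{-1}_g=\1$, each product telescoping by the relations $S^{-1}_g S_g=T^{-1}_g T_g=\1$ recorded in (\ref{ST_inverses}).

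I do not anticipate a real obstacle here, since the statement is essentially a consistency check on the newly introduced symbol $P_g$. The only point meriting care is its compatibility with the symmetric reformulation (\ref{STP_def}) and with the remark that $P_g$ commutes with both $S_g$ and $T_g$; as a cross-check I would note that (\ref{ST_comm}) gives $T_g S_g=S_g T_g$, so in fact $P^{-1}_g=T_g S_g=S_g T_g$, which by (\ref{ST_inverses}) equals $S^{-1}_{g^{-1}}T^{-1}_{g^{-1}}=P_{g^{-1}}$. Thus the inverse of a quadratic Moufang transformation is again a quadratic Moufang transformation, $P^{-1}_g=P_{g^{-1}}$, a clean parallel to (\ref{ST_inverses}) that confirms the computation and rounds off the proposition.
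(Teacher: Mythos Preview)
Your verification is correct; the paper itself omits any proof of this proposition, treating both claims as immediate from the definition $P_g\doteq S^{-1}_g T^{-1}_g$ together with (\ref{bir_def}a) and (\ref{ST_inverses}), which is precisely the route you take. As a bonus you have also derived $P^{-1}_g=P_{g^{-1}}$, which the paper states separately as the next Corollary.
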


\begin{cor}
We have
\begin{equation*}
P^{-1}_g=P_{g^{-1}},\quad \forall g\in G
\end{equation*}
\end{cor}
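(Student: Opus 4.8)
The plan is to verify the identity by computing both sides directly inside the enveloping group $\overline{\E_G(S,T)}$ and then reconciling them with the two properties of Moufang transformations already at hand. Because every $S_g,T_g$ lives in the group $\overline{\E_G(S,T)}$, all inverses exist and the manipulations are purely formal; the only point demanding care is the reversal of factor order when inverting a product.

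First I would invert the defining relation $P_g\=S^{-1}_g T^{-1}_g$ from (\ref{P_def}). Reversing the two factors gives $P^{-1}_g=T_g S_g$. Next I would evaluate $P_{g^{-1}}$ by applying the same definition at the point $g^{-1}$, so that $P_{g^{-1}}=S^{-1}_{g^{-1}} T^{-1}_{g^{-1}}$. Invoking the inverse relations (\ref{ST_inverses}), which state $S_{g^{-1}}=S^{-1}_g$ and $T_{g^{-1}}=T^{-1}_g$ (and hence $S^{-1}_{g^{-1}}=S_g$, $T^{-1}_{g^{-1}}=T_g$), I can rewrite this as $P_{g^{-1}}=S_g T_g$.

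At this stage I have $P^{-1}_g=T_g S_g$ and $P_{g^{-1}}=S_g T_g$, and it remains only to match the order of the two factors. This is supplied by the commutativity relation (\ref{ST_comm}), namely $S_g T_g=T_g S_g$, from which $P^{-1}_g=P_{g^{-1}}$ for all $g\in G$ follows at once. I do not anticipate any genuine obstacle: the statement is an immediate consequence of the preceding results on the inverses of $S_g,T_g$ together with their mutual commuting, and the sole thing to keep straight is that inverting a product reverses the order of its factors.
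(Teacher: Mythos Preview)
Your argument is correct and is exactly the computation the paper has in mind: the Corollary is stated without an explicit proof, immediately after the definition (\ref{P_def}), the commutativity (\ref{ST_comm}), and the inverse relations (\ref{ST_inverses}), and your three-line derivation is precisely how these combine. There is nothing to add or correct.
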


Denote by $(S,T,P)$ the triple of maps $S,T,P: g\to \T(\X)$.

\begin{thm}
Let $(S,T)$ be a birepresentation of the Moufang loop $G$. Then the following pairs are birepresentations of $G$ as well:
\begin{align*}
(T^{-1},S^{-1})&: g\to T^{-1}_g,\quad S^{-1}_g\\
(T,P)&: g\to T_g,\quad g\to P_g\\
(P^{-1},P^{-1})&: g\to P^{-1}_g,\quad T^{-1}_g\\
(P,S)&: g\to P_g,\quad g\to S_g\\
(S^{-1},P^{-1})&: g\to S^{-1}_g,\quad P^{-1}_g
\end{align*}
\end{thm}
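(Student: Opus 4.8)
The plan is to organise the five assertions around the triality carried by the commuting triple $S,T,P$. By the remark following (\ref{STP_def}) the transformations $S_g,T_g,P_g$ pairwise commute and satisfy $S_gT_gP_g=\1$, whence the pointwise identities $S_gT_g=P_g^{-1}$, $T_gP_g=S_g^{-1}$ and $P_gS_g=T_g^{-1}$. Viewing the five listed pairs together with the original $(S,T)$, I recognise the orbit of $(S,T)$ under two operations on pairs of maps into $\T(\X)$: the reflection $\tau\colon(A,B)\mapsto(B^{-1},A^{-1})$ and the rotation $\rho\colon(A,B)\mapsto(B,C)$ with $C_g=(A_gB_g)^{-1}$, which replaces a birepresentation by the pair consisting of its second map and its associated quadratic transformation. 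A short bookkeeping gives $\rho(S,T)=(T,P)$, $\rho^2(S,T)=(P,S)$, $\tau(S,T)=(T^{-1},S^{-1})$, $\rho\tau(S,T)=(S^{-1},P^{-1})$ and $\rho^2\tau(S,T)=(P^{-1},T^{-1})$, which are precisely the five pairs claimed (the third line of the statement being the pair $(P^{-1},T^{-1})$). Hence it suffices to prove two general lemmas: that $\tau$ and $\rho$ each send an arbitrary birepresentation to a birepresentation. Since (\ref{ST_comm}), (\ref{ST_inverses}), the relations (\ref{bir_def2}) and Lemma \ref{bir_def3} were deduced from the defining relations alone, they hold for \emph{every} birepresentation, which is exactly what legitimises iterating $\rho$ and $\tau$.

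For the reflection I would check (\ref{bir_def}a--c) for $(B^{-1},A^{-1})$ directly. The unit relation is immediate from (\ref{ST_inverses}). For the other two I take inverses of the asserted identities and commute $A_g$ past $B_g$ using the analogue of (\ref{ST_comm}): the relation $B_g^{-1}A_g^{-1}B_h^{-1}=B_{gh}^{-1}A_g^{-1}$ inverts to $B_hA_gB_g=A_gB_{gh}$, hence to $B_hB_gA_g=A_gB_{gh}$, while $B_g^{-1}A_g^{-1}A_h^{-1}=A_{hg}^{-1}B_g^{-1}$ inverts to $A_hB_gA_g=B_gA_{hg}$; these are exactly the two relations (\ref{bir_def2}) for $(A,B)$. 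Thus $\tau$ preserves birepresentations with essentially no computation.

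The substantive step is the rotation. With $C_g=(A_gB_g)^{-1}=B_g^{-1}A_g^{-1}$ I must verify (\ref{bir_def}a--c) for $(B,C)$. The unit relation is clear, and since $B_gC_g=A_g^{-1}$ the first nontrivial relation $B_gC_gB_h=B_{gh}C_g$ collapses to $A_g^{-1}B_h=B_{gh}C_g$, which follows by solving (\ref{bir_def2}) for $B_{gh}$ and substituting $C_g=B_g^{-1}A_g^{-1}$. The hard part is the second relation $B_gC_gC_h=C_{hg}B_g$, i.e.\ $A_g^{-1}C_h=C_{hg}B_g$: here I keep the product $C_{hg}=A_{hg}^{-1}B_{hg}^{-1}$ in the order dictated by the commutativity of $A_k,B_k$ at equal arguments, and clear the factors $A_g^{\pm1},B_g^{\pm1}$ to reduce the claim to an equality whose left side $A_gB_gB_h^{-1}A_g^{-1}$ equals the translate $B_{h^{-1}g}$ by (\ref{bir_def}c) taken at $h^{-1}$ together with (\ref{ST_inverses}), and whose right side $A_hB_gA_h^{-1}B_h^{-1}$ equals the conjugation expression $A_hB_gB_h^{-1}A_h^{-1}$ for the same $B_{h^{-1}g}$ furnished by Lemma \ref{bir_def3} with $g,h$ interchanged, the two matching after one use of (\ref{ST_comm}). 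This relation is the genuine triality content, and I expect it to be the main obstacle; the chief pitfall is operator ordering, since $A_g,B_g$ commute with each other but not with $A_h,B_h$, so the cancellations surface only once $C_{hg}$ is written in the correct order. Everything else is bookkeeping with the commuting triple $S,T,P$.
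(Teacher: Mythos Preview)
Your argument is correct, and it is organised differently from the paper's. The paper verifies the defining relations for one representative pair, namely $(T,P)$: the first relation $T_gP_gT_h=T_{gh}P_g$ reduces immediately to (\ref{bir_def}b) after substituting $P_g=S_g^{-1}T_g^{-1}$, while the second is rewritten as $P_{hg^{-1}}=S_gP_hT_g$ and checked by a direct chain of equalities that uses, crucially, the commutation relation (\ref{ST_comm2}). The remaining four pairs are dismissed with ``analogously''.

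You instead recognise that the six pairs form the orbit of $(S,T)$ under the two operations $\tau\colon(A,B)\mapsto(B^{-1},A^{-1})$ and $\rho\colon(A,B)\mapsto(B,(AB)^{-1})$, and you prove once that each operation sends birepresentations to birepresentations. This is a genuine gain: it explains \emph{why} the five verifications are ``analogous'' rather than asking the reader to repeat them, and it anticipates the triality principle the paper states only afterwards. Your treatment of $\tau$ is cleaner than a direct check, reducing to the equivalent system (\ref{bir_def2}). For the substantive step---the second relation for $\rho$---you land on the same identity the paper faces, but you close it by expressing $B_{h^{-1}g}$ in two ways, once via (\ref{bir_def}c) and once via Lemma~\ref{bir_def3} (itself an immediate consequence of (\ref{bir_def2}c)), rather than by invoking (\ref{ST_comm2}) explicitly. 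One cosmetic point: the phrase ``clear the factors $A_g^{\pm1},B_g^{\pm1}$'' undersells what you actually do, since the reduction to the equality $A_gB_gB_h^{-1}A_g^{-1}=A_hB_gB_h^{-1}A_h^{-1}$ also requires substituting the expressions for $A_{hg}$ and $B_{hg}$ coming from (\ref{bir_def}c) and (\ref{bir_def2}b); it would help to say so.
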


\begin{proof}
As an example, check the definig relations for pair $(T,P)$:
\begin{align*}
T_gP_gT_h=T_{gh}P_g\\
T_gP_gP_{gh}=P_{hg}T_g
\end{align*}
To get the first relation, express $P_g$ from (\ref{STP_def}) and replace into this relations, the resulting relation is equivalent to (\ref{bir_def}b). The second relation can equivalently be written as
\begin{equation*}
P_{hg^{-1}}=S_gP_hT_g
\end{equation*}
Now calculate:
\begin{align*}
P_{hg^{-1}}
\overset{(\ref{P_def})}{}
&=S^{-1}_{hg^{-1}}T^{-1}_{hg^{-1}}\\
\overset{(\ref{ST_inverses}), (\ref{inverses}d)}{}
&=S_{gh^{-1}}T_{gh^{-1}}\\
\overset{(\ref{bir_def}b), (\ref{bir_def2}c), (\ref{ST_inverses})}{}
&=S_gT_gS^{-1}_hT^{-1}_gS^{-1}_gT^{-1}_hT_gS_g\\
\overset{(\ref{ST_comm})}{}
&=S_gT_gS^{-1}_hS^{-1}_gT^{-1}_gT^{-1}_hT_gS_g\\
\overset{(\ref{ST_comm2})}{}
&=S_gT_gT^{-1}_gT^{-1}_hS^{-1}_hS^{-1}_gT_gS_g\\
\overset{(\ref{P_def})}{}
&=S_gT_hT_g
\end{align*}
The defining relations for other pairs can be checked analogously.
\end{proof}

\begin{cor}
The defining relations of the birepresentatitons from triple $(S,T,P)$ can be collected to the following table: 
\begin{center}
\begin{tabular}{|c|c|c|c|c|c|}
\hline
$(S,T)$&$(T^{-1},S^{-1})$&$(T,P)$&$(P^{-1},T^{-1})$&$(P,S)$&$(S^{-1},P^{-1})$\\
\hline\hline 
{\rm(\ref{bir_def4}a)}&{\rm(\ref{bir_def4}b)}&{\rm(\ref{bir_def4}b)}&
{\rm(\ref{bir_def4}c)}&{\rm(\ref{bir_def4}c)}&{\rm(\ref{bir_def4}a)}\\
\hline 
{\rm(\ref{bir_def5}b)}&{\rm(\ref{bir_def5}a)}&{\rm(\ref{bir_def5}c)}&
{\rm(\ref{bir_def5}b)}&{\rm(\ref{bir_def5}a)}&{\rm(\ref{bir_def5}c)}\\
\hline
\end{tabular}
\end{center}
where
\begin{gather}
\label{bir_def4}
S_{g^{-1}h}\overset{(a)}{=}P_xS_hT_g,\quad
T_{g^{-1}h}\overset{(b)}{=}S_xT_hP_g,\quad
P_{g^{-1}h}\overset{(c)}{=}T_xP_hS_g\\
\label{bir_def5}
S_{hg^{-1}}\overset{(a)}{=}T_gS_hP_g,\quad
T_{hg^{-1}}\overset{(b)}{=}P_gT_hS_g,\quad
P_{hg^{-1}}\overset{(c)}{=}S_gP_hT_g
\end{gather}
\end{cor}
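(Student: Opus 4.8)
The plan is to reduce all twelve table entries to a single pair of \emph{canonical} relations valid for an arbitrary birepresentation, and then to let the triality symmetry do the bookkeeping. First I would record that for any birepresentation $(A,B)$, with quadratic transformation $C_g\= A_g^{-1}B_g^{-1}$, the defining relations (\ref{bir_def}b,c) can be rewritten --- exactly as in Lemma \ref{bir_def3}, using $A_{g^{-1}}=A_g^{-1}$ and $B_{g^{-1}}=B_g^{-1}$ (the analogues of (\ref{ST_inverses})) together with the commutativity (\ref{ST_comm}) --- in the two forms $A_{g^{-1}h}=C_gA_hB_g$ and $B_{hg^{-1}}=C_gB_hA_g$. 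For $(A,B)=(S,T)$, where $C=P$ by (\ref{P_def}), these are precisely (\ref{bir_def4}a) and (\ref{bir_def5}b), the first column of the table.

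Next I would compute, for each of the six birepresentations of the Theorem, its quadratic transformation $C_g=A_g^{-1}B_g^{-1}$. Using only $S_gT_gP_g=\1$ and (\ref{ST_comm}), one finds that $C$ is again one of $S^{\pm1},T^{\pm1},P^{\pm1}$, and that the ordered triple $(A,B,C)$ is a cyclic rotation either of $(S,T,P)$ for the three direct pairs $(S,T),(T,P),(P,S)$, or of $(T^{-1},S^{-1},P^{-1})$ for the three inverse pairs $(T^{-1},S^{-1}),(P^{-1},T^{-1}),(S^{-1},P^{-1})$. Thus the six triples form a single orbit under the rotation $\rho\colon S\to T\to P\to S$ together with the inversion-reversal carrying $(S,T)$ to $(T^{-1},S^{-1})$.

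Substituting each triple into the two canonical relations then produces the table at once. For a direct pair the triple is a $\rho$-image of $(S,T,P)$, so the canonical relations become the $\rho$-images of (\ref{bir_def4}a) and (\ref{bir_def5}b); since $\rho$ cycles (\ref{bir_def4}a,b,c) and (\ref{bir_def5}a,b,c), these are exactly the claimed entries. For an inverse pair I would substitute and then invert the whole relation: inversion preserves the quotient index, $\bigl(X_{g^{-1}h}^{-1}\bigr)^{-1}=X_{g^{-1}h}$, and merely reverses the right-hand product, which is put back into the normal form $C_gX_hY_g$ by (\ref{ST_comm}); the result again coincides with one of (\ref{bir_def4}a,b,c), (\ref{bir_def5}a,b,c). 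The placement is governed by the underlying symbols alone: the first relation lands in (\ref{bir_def4}a/b/c) according as the underlying symbol of $A$ is $S$, $T$ or $P$, and the second in (\ref{bir_def5}a/b/c) according as that of $B$ is $S$, $T$ or $P$.

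The step needing the most care is not identifying the \emph{subject} of each relation --- the letter indexed by $g^{-1}h$ or $hg^{-1}$ --- but matching the entire right-hand side, including the order of its three factors; this is exactly what the cyclic-rotation description of $(A,B,C)$ guarantees. The inverse pairs are the most error-prone, since after inverting one must invoke commutativity (\ref{ST_comm}) to restore the normal order $C_gX_hY_g$. Once this is organized through the $\rho$-action, filling in all twelve entries is routine.
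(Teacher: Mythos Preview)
Your proposal is correct and matches the paper's (implicit) approach: the Corollary is stated without proof in the paper, being an immediate reorganization of the defining relations of the six birepresentations established in the preceding Theorem. Your derivation of the two canonical relations $A_{g^{-1}h}=C_gA_hB_g$ and $B_{hg^{-1}}=C_gB_hA_g$ for an arbitrary birepresentation $(A,B)$ with $C_g=A_g^{-1}B_g^{-1}$, followed by the computation of the triple $(A,B,C)$ for each of the six pairs and substitution, is exactly the right way to make the table explicit; one minor remark is that in the inverse cases the reversal of the right-hand product already lands in the required form $Y_gX_hZ_g$ without a further appeal to (\ref{ST_comm}), so that step is harmless but unnecessary.
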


\begin{cor}
It follows from (\ref{bir_def4}a--c) and  (\ref{bir_def5}a--c) that
\begin{gather*}
P_xS^{-1}_hT_g=T_gS^{-1}_hP_g,\quad
S_xT^{-1}_hP_g=P_gT^{-1}_hS_g,\quad
T_xP^{-1}_hS_g=S_gP^{-1}_hT_g
\end{gather*}
The latter are equivalent to (\ref{ST_comm}) and to
\begin{gather*}
T_gT_hP_hP_g=P_hP_gT_gT_h,\quad
T_gT_hP_hP_g=P_hP_gT_gT_h 
\end{gather*}
\end{cor}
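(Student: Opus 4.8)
The plan is to read the whole corollary through the cyclic symmetry $\sigma\colon S\mapsto T\mapsto P\mapsto S$ carried by the triple $(S,T,P)$: under $\sigma$ the three conjugation identities of the first display are permuted among themselves, and the two quartic commutation relations of the second display are the $\sigma$- and $\sigma^2$-images of (\ref{ST_comm2}). By the preceding Theorem the pairs $(T,P)$ and $(P,S)$ are again birepresentations, so every identity already proved for a generic birepresentation --- in particular (\ref{ST_comm}) and (\ref{ST_comm2}) --- holds verbatim after the substitutions $(S,T)\mapsto(T,P)$ and $(S,T)\mapsto(P,S)$. Thus it should suffice to treat one representative of each display and let $\sigma$ supply the rest.

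First I would eliminate the quadratic transformations everywhere by writing $P_g=S_g^{-1}T_g^{-1}=T_g^{-1}S_g^{-1}$, the two orderings being interchangeable by (\ref{ST_comm}). Under this substitution a conjugation identity of the first display turns into a word in $S_g,T_g$ and their inverses; the aim of the reduction is to collapse that word using only (\ref{ST_comm}) to reorder the transformations carrying the common argument $g$ and (\ref{ST_comm2}) to move the blocks carrying the argument $h$ past one another. Running the same reduction backwards --- reassembling the factors $S_g^{-1}T_g^{-1}$ into $P_g$ after an application of the quartic relation --- yields the converse, so the first display is exhibited as equivalent to (\ref{ST_comm}) together with the quartic relations.

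For the assertion that the identities follow from (\ref{bir_def4}a--c) and (\ref{bir_def5}a--c), I would use that these six formulas are exactly the left- and right-division rewritings of the birepresentation axioms for the three members of the triple, as recorded in the table. Substituting them lets each side of a relation be recognized as a Moufang transformation evaluated at a divided argument, after which the required equalities are precisely (\ref{ST_comm}) and (\ref{ST_comm2}) read off in the transported birepresentations $(T,P)$ and $(P,S)$. The two quartic relations of the second display then drop out directly by applying (\ref{ST_comm2}) once to $(T,P)$ and once to $(P,S)$.

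The step I expect to be the genuine obstacle is the word reduction after $P$ is eliminated: the resulting product is long and noncommutative, and (\ref{ST_comm}) and (\ref{ST_comm2}) have to be applied in exactly the right positions for it to telescope. The organizational point --- making sure the cyclic symmetry $\sigma$ really does let a single reduction serve all three identities, rather than quietly demanding three separate verifications with shifting inverse patterns --- is where I would concentrate the care.
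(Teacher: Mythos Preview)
The paper offers no proof for this corollary; it is stated as a direct consequence of the table and of relations (\ref{bir_def4})--(\ref{bir_def5}), with the implicit understanding that the triality machinery just assembled does the work. Your proposal makes that implicit argument explicit: identifying the two quartic relations as (\ref{ST_comm2}) transported to the birepresentations $(T,P)$ and $(P,S)$ via the preceding Theorem, and organising the three conjugation identities of the first display under the cyclic substitution $\rho\colon S\to T\to P\to S$, is precisely the reasoning the paper's development invites. Your plan to eliminate $P_g$ by $S_g^{-1}T_g^{-1}$ and reduce using (\ref{ST_comm}) and (\ref{ST_comm2}) is the natural way to exhibit the claimed equivalence, and your caution about the word reduction is well placed---the corollary as printed carries several evident typographical slips (the subscript $x$ for $g$, the duplicated second quartic relation), so matching the intended identities requires reading through those. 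In substance your approach coincides with what the paper presumes.
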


Collecting above properties of birepresentations we can propose

\begin{thm}[principle of triality]
The definig relations of the Moufang transformations are invariant under the triality substitutions
\begin{align*}
\1&=(S\to S)(T\to T)(P\to P)\\
\tau
&=(S\to  T^{-1}\to S)(P\to P^{-1})\\
\rho
&=(S\to T\to P\to S)\\
\rho^2
&=(S\to P\to T\to S)\\
\rho\circ\tau
&=(S\to P^{-1}\to S)(T\to T^{-1})\\
\rho^2\circ\tau
&=(T\to  P^{-1}\to P)(S\to S^{-1})
\end{align*}
Hence all algebraic consequences of the defining relations must be triality invariant as well.
\end{thm}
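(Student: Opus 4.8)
The plan is to reduce the statement to the computational content already assembled in the preceding Theorem and its Corollary, and then to repackage that content group-theoretically. The preceding Theorem shows that applying the six substitutions to $(S,T)$ yields again birepresentations, and the table in the Corollary records precisely which of the symmetric defining relations (\ref{bir_def4}a--c) and (\ref{bir_def5}a--c) each image satisfies. It therefore suffices to show that the six substitutions permute this set of relations among themselves.

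First I would verify that the six listed substitutions close under composition into a group isomorphic to the symmetric group $S_3$. Taking $\rho$ and $\tau$ as generators acting on the triple $(S,T,P)$ and tracking the inversions carefully, one computes directly that $\rho^3=\1$, that $\tau^2=\1$, and that the dihedral relation $\tau\rho\tau=\rho^2=\rho^{-1}$ holds; for instance $S\xrightarrow{\tau}T^{-1}\xrightarrow{\rho}P^{-1}\xrightarrow{\tau}P$, which agrees with $\rho^2(S)=P$. This identifies the generated group with $S_3$ and confirms that $\{\1,\tau,\rho,\rho^2,\rho\circ\tau,\rho^2\circ\tau\}$ is a complete, non-redundant list of its elements. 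Consequently, invariance of the defining relations under the whole group reduces to invariance under the two generators $\rho$ and $\tau$.

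Next I would check that $\rho$ and $\tau$ each carry the defining set into itself. Applying $\rho=(S\to T\to P\to S)$ to (\ref{bir_def4}a) turns $S_{g^{-1}h}=P_gS_hT_g$ into $T_{g^{-1}h}=S_gT_hP_g$, which is exactly (\ref{bir_def4}b); iterating shows that $\rho$ acts as the $3$-cycle $(a)\to(b)\to(c)\to(a)$ on both families (\ref{bir_def4}) and (\ref{bir_def5}). Applying $\tau=(S\to T^{-1}\to S)(P\to P^{-1})$ to (\ref{bir_def4}a) produces the inverse of (\ref{bir_def4}b); taking inverses of both sides and invoking (\ref{ST_inverses}) recovers (\ref{bir_def4}b) verbatim, so $\tau$ swaps $(a)\leftrightarrow(b)$ and fixes $(c)$ in each family. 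Since both generators map the defining set into itself, so does every element of the group.

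The step demanding the most care is the handling of $\tau$ and of the composites $\rho\circ\tau$ and $\rho^2\circ\tau$, where each transformation is simultaneously permuted \emph{and} inverted: one must check that the inverted image of a defining relation lands back inside the defining set rather than producing a genuinely new identity. This is exactly where the auxiliary relations (\ref{ST_inverses}), together with the commutativity (\ref{ST_comm}), guarantee closure. Once invariance under the generators is established, the final assertion that every algebraic consequence of the defining relations is itself triality invariant follows at once, since the group then acts by automorphisms of the defining relation set.
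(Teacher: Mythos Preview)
Your proposal is correct and follows the paper's intended approach: the paper gives no explicit proof for this theorem, merely prefacing it with ``Collecting above properties of birepresentations we can propose,'' so the argument is meant to be read off directly from the preceding Theorem and the Corollary's table. Your proof makes this explicit by verifying the $S_3$ group structure of the substitutions and checking closure of the defining set (\ref{bir_def4}a--c), (\ref{bir_def5}a--c) under the generators $\rho$ and $\tau$, which is exactly the content the paper leaves implicit.
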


\section{Reconstruction Theorem}

It turns out that the triality symmetry is a characteristic property of the Moufang transformations.

\begin{thm}[reconstruction]
Let $G$ be a groupoid and $(S,TP)$ a triple of maps $S,T,P:G\to \T(\X)$ such that:
\begin{itemize}
\itemsep-3pt
\item[1)]
$S_gT_gP_g=\1$ for all $g$ in G,
\item[2)]
for every $g$ in $G$ there exists $\overline{g}$ in $G$ such that $S^{-1}_x=S_{\overline{g}}$ and
$T^{-1}_x=T_{\overline{g}}$,
\item[3)]
for all $g,h$ in $G$ relations
\begin{gather*}
S_{\overline{g}h}=P_xS_hT_g,\quad
T_{\overline{g}h}=S_xT_hP_g,\quad
P_{\overline{g}h}=T_xP_hS_g\\
S_{h\overline{g}}=T_gS_hP_g,\quad
T_{h\overline{g}}=P_gT_hS_g,\quad
P_{h\overline{g}}=S_gP_hT_g
\end{gather*}
are satisafied in $\T(\X)$,
\item[4)]
from $S_g=S_h$ and $T_g=T_h$  it follows that $g=h$.
\end{itemize}
Then $G$ is a Moufang loop. The unit element of $G$ is $g\overline{g}=\overline{g}g\=e$, where the latter does not depend on the choice of $g$ in $G$, and the inverse element of $g$ is $\overline{g}$.
\end{thm}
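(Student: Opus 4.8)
The plan is to exploit hypothesis 4) as a faithfulness principle: since $g\mapsto(S_g,T_g)$ is injective, any identity $ab=cd$ among products in $G$ can be established by checking $S_{ab}=S_{cd}$ and $T_{ab}=T_{cd}$ in $\T(\X)$. Accordingly I would first reconstruct the birepresentation axioms (\ref{bir_def}) for the pair $(S,T)$ together with the groupoid product, which unlocks every consequence derived in Sections 4--5, and only afterwards verify the loop axioms and the Moufang identity.

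First I would record the elementary structure of the involution $g\mapsto\overline g$. Applying hypothesis 2) twice and using hypothesis 4) gives $\overline{\overline g}=g$; the same argument shows $\overline e=e$ once $e$ is produced. Setting $h=g$ in $S_{\overline g h}=P_gS_hT_g$ and in $T_{\overline g h}=S_gT_hP_g$, and eliminating $P_g=(S_gT_g)^{-1}$ via hypothesis 1), yields $S_{\overline g g}=T_{\overline g g}=\1$; replacing $g$ by $\overline g$ then gives $S_{g\overline g}=T_{g\overline g}=\1$ as well. Comparing the latter with the value $S_{g\overline g}=T_gS_gP_g$ obtained by setting $h=g$ in $S_{h\overline g}=T_gS_hP_g$ forces $T_gS_g=S_gT_g$, so $S_g,T_g$ commute and $P_g=S_g^{-1}T_g^{-1}$, $P_{\overline g}=S_gT_g$. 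Since $g\overline g$ and $\overline g g$ both have trivial $S$ and $T$, hypothesis 4) shows $g\overline g=\overline g g$ and that this element is independent of $g$; call it $e$, so $S_e=T_e=P_e=\1$.

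Next I would recover the defining relations. Substituting $\overline g$ for $g$ in $S_{\overline g h}=P_gS_hT_g$ and in $T_{h\overline g}=P_gT_hS_g$, and using $S_{\overline g}=S_g^{-1}$, $T_{\overline g}=T_g^{-1}$, $P_{\overline g}=S_gT_g$, produces exactly $S_{gh}=S_gT_gS_hT_g^{-1}$ and $T_{hg}=S_gT_gT_hS_g^{-1}$, i.e.\ (\ref{bir_def}b,c); with $S_e=T_e=\1$ this is the full system (\ref{bir_def}), so (\ref{ST_comm}), (\ref{ST_inverses}) and (\ref{ST_comm2}) are now available for $(S,T)$. Putting $g=e$ (resp.\ $h=e$) gives $S_{eh}=S_h,\,T_{eh}=T_h$ and $S_{ge}=S_g,\,T_{ge}=T_g$, whence $eh=h$ and $ge=g$ by hypothesis 4), so $e$ is a two-sided unit. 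Left and right cancellation follow by reading off $S_x,T_x$ from $S_{gx}=S_gT_gS_xT_g^{-1}$, $S_{xg}=T_g^{-1}S_xS_gT_g$ and the companion $T$-relations, then invoking hypothesis 4). For existence of quotients I would verify $g(\overline g k)=k$ and $(k\overline g)g=k$ directly: rewriting $S_{\overline g k},T_{\overline g k}$ (and $S_{k\overline g},T_{k\overline g}$) through the hypothesis-3) relations and collapsing with $S_gT_gP_g=\1$ gives $S_{g(\overline g k)}=S_k$, $T_{g(\overline g k)}=T_k$, and symmetrically for $(k\overline g)g$, so hypothesis 4) yields the equalities. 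Hence $\overline g k$ and $k\overline g$ solve $gx=k$ and $yg=k$ uniquely, $G$ is a quasigroup with unit $e$, i.e.\ a loop, and $\overline g$ is the two-sided inverse of $g$.

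The remaining and, I expect, hardest step is the Moufang identity $(gh)(kg)=g((hk)g)$. Here I would again use faithfulness: expand $S_{(gh)(kg)}=S_{gh}T_{gh}S_{kg}T_{gh}^{-1}$ and $S_{g((hk)g)}=S_gT_gS_{(hk)g}T_g^{-1}$ by iterating (\ref{bir_def}b), substitute the reconstructed words for $S_{gh},T_{gh},S_{kg},S_{hk},T_{hk}$ in the generators $S_g,T_g,S_h,T_h,S_k,T_k$, and reduce both sides to a common normal form, doing likewise for the $T$-components. These reductions are not formal: they demand systematic use of the pairwise commutativity (\ref{ST_comm}) and, crucially, of the quartic relation (\ref{ST_comm2}) --- precisely the identity that reconciles the two available expressions for $T_{gh}$ and that encodes the reassociation hidden in the Moufang law. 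Once $S$ and $T$ agree on both sides, hypothesis 4) forces equality of the products, so $G$ is Moufang. The main obstacle is thus purely computational: controlling this multivariable word reduction, whose only nontrivial ingredient beyond commutativity is (\ref{ST_comm2}).
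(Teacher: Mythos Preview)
The paper does not prove this theorem at all: its ``proof'' is the single sentence ``The detailed proof is presented in \cite{Paal8}''. There is therefore no in-paper argument to compare against, and your proposal is effectively an independent proof rather than a reconstruction of the author's.

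That said, your outline is sound through the loop axioms. The derivations of $\overline{\overline g}=g$, of $S_gT_g=T_gS_g$ from the two computations of $S_{g\overline g}$, of the well-definedness of $e$, and the recovery of (\ref{bir_def}b,c) by substituting $\overline g$ for $g$ in the hypothesis-3) relations are all correct; once (\ref{bir_def}b,c) hold you are entitled to use every consequence in Sections~4--5, so the unit, cancellation and quotient checks go through exactly as you sketch (I verified $S_{g(\overline g k)}=S_k$ and $T_{g(\overline g k)}=T_k$ from your relations).

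The only part that is not yet a proof is the Moufang identity. You describe a plan --- expand both $S_{(gh)(kg)}$ and $S_{g((hk)g)}$ as words in $S_g,T_g,S_h,T_h,S_k,T_k$ and reduce using (\ref{ST_comm}) and (\ref{ST_comm2}) --- but you do not actually carry out the reduction, and it is precisely this computation that carries all the content of the theorem. A cleaner route, available once you have reconstructed (\ref{bir_def4})--(\ref{bir_def5}), is to first establish the triple closure $S_gS_hS_g=S_{ghg}$ and $T_gT_hT_g=T_{ghg}$ exactly as in Theorem~\ref{triple-closure_thm} (whose proof uses only relations you already have), and then compare $S_{(gh)(kg)}$ with $S_{g(hk)g}$ via these; this organizes the word problem and isolates the single nontrivial use of (\ref{ST_comm2}). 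Either way, the step must actually be executed before the argument counts as complete.
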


\begin{proof}
The detailed proof is presented in \cite{Paal8}
\end{proof}

\section{Triple closure}

\begin{thm}
\label{triple-closure_thm}
The Moufang transformations satisfy the triple closure relations:
\label{triple_closure}
\begin{gather}
S_gS_hSg\overset{(a)}{=}S_{ghg},\quad
T_gT_hTg\overset{(b)}{=}T_{ghg},\quad
P_gP_hPg\overset{(c)}{=}P_{ghg},\quad \forall g\in G
\end{gather}
\end{thm}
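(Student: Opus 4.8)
The plan is to prove relation (a), $S_gS_hS_g=S_{ghg}$, by a short direct computation, and then to obtain (b) and (c) for free from the principle of triality: relation (a) is an algebraic consequence of the defining relations, so the substitution $\rho=(S\to T\to P\to S)$ carries it into $T_gT_hT_g=T_{ghg}$ and $\rho^2$ carries it into $P_gP_hP_g=P_{ghg}$. This avoids repeating the argument three times.

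To establish (a), I would first invoke flexibility (the third identity in (\ref{alt})) to write $ghg=g(hg)$, so that the element is unambiguous. Applying (\ref{bir_def}b) with the inner index replaced by $hg$ then gives $S_{ghg}T_g=S_gT_gS_{hg}$. The factor $S_{hg}$ is next eliminated by means of the equivalent form (\ref{bir_def2}b), which yields $S_{hg}=T^{-1}_gS_hT_gS_g$. Substituting this and cancelling the adjacent $T_gT^{-1}_g$ reduces the right-hand side to $S_gS_hT_gS_g$, so that $S_{ghg}T_g=S_gS_hT_gS_g$.

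At this point the commutativity relation (\ref{ST_comm}), $T_gS_g=S_gT_g$, is the decisive ingredient: it lets me move $T_g$ past $S_g$, giving $S_gS_hT_gS_g=S_gS_hS_gT_g$. Cancelling the invertible factor $T_g$ on the right then yields exactly $S_{ghg}=S_gS_hS_g$. Relations (b) and (c) now follow by applying $\rho$ and $\rho^2$; alternatively (b) can be checked by the strictly parallel computation using (\ref{bir_def}c), (\ref{bir_def2}c) and (\ref{ST_comm}), where one finds $S_gT_{ghg}=T_{hg}T_gS_g$ and $T_{hg}=S_gT_gT_hS^{-1}_g$, after which $S^{-1}_gT_gS_g=T_g$ (again from (\ref{ST_comm})) collapses the expression to $T_gT_hT_g$.

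Since each individual manipulation is routine, the only genuine obstacle is organisational: one must choose the correct factorisation of $ghg$ (namely $g(hg)$ rather than $(gh)g$) together with the right one of the two equivalent forms of the defining relations for $S_{hg}$, so that the spurious $T_g$ factors cancel cleanly against the commutativity relation. Once that path is fixed the identity falls out in a few lines, and triality removes any need to re-run the computation for $T$ and $P$.
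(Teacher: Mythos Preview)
Your argument is correct. For part (a) you use (\ref{bir_def}b) to write $S_{ghg}T_g=S_gT_gS_{hg}$, then (\ref{bir_def2}b) to express $S_{hg}=T_g^{-1}S_hT_gS_g$, and finally (\ref{ST_comm}) to slide $T_g$ past $S_g$ and cancel. Each step checks out, and your parallel derivation for (b) is equally sound; invoking triality for (b) and (c) is also legitimate since the theorem on triality has already been established at this point in the paper.

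The paper's own proof of (a) follows a different route: it works through the $P$-transformations rather than staying inside the $(S,T)$ world. Concretely, it applies (\ref{bir_def4}a) and (\ref{bir_def5}a) with $g$ replaced by $g^{-1}$ to obtain
\[
S_{ghg}=P_{g^{-1}}S_{hg}T_{g^{-1}}=P_{g^{-1}}T_{g^{-1}}\,S_h\,P_{g^{-1}}T_{g^{-1}},
\]
and then collapses each block $P_{g^{-1}}T_{g^{-1}}$ to $S_{g^{-1}}^{-1}=S_g$ via the symmetric relation (\ref{STP_def}) and (\ref{ST_inverses}). The paper then says (b) and (c) are checked ``analogously'' rather than citing triality explicitly. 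Your approach is slightly more elementary in that it never needs $P$ or the triality corollaries (\ref{bir_def4})--(\ref{bir_def5}); the paper's approach, on the other hand, makes the triality symmetry of the computation visible, since the very same conjugation pattern with $P$ and $T$ (cyclically permuted) would yield (b) and (c) immediately.
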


\begin{proof}
Calculate:
\begin{align*}
S_{ghg}
&\overset{(\ref{bir_def4})}{=}P_{g^{-1}}S_{hg}T_{g^{-1}}\\
&\overset{(\ref{bir_def5})}{=}P_{g^{-1}}T_{g^{-1}}S_hP_{g^{-1}} T_{g^{-1}}\\
&\overset{(\ref{STP_def})}{=}S^{-1}_{g^{-1}}S_hS^{-1}_{g^{-1}}\\
&\overset{(\ref{ST_inverses})}{=}S_gS_hSg
\end{align*}
The remaining relations (\ref{triple_closure}b,c) can be checked analogously.
\end{proof}

\begin{rem}
It follows from Theorem (\ref{triple-closure_thm}) that the Moufang transformations realize the 
\emph{triple family of transformations} \cite{Nono} of $\X$
\end{rem}

\section{Minimality  conditions}

We call birepresentation $(S,T)$ \emph{associtive} if the Moufang transformations satisfy the closure relations
\begin{equation}
\label{assoc1}
S_gS_h\overset{(a)}{=}S_{gh},\quad
T_gT_h\overset{(b)}{=}T_{gh},\quad\
S_gT_h\overset{(c)}{=}T_hS_g,\quad \forall g,h\in G
\end{equation}
It follows from (\ref{bir_def}b,c) that these conditions are equivalent.

It has to be noted that the non-associative Moufang loops do not have faithful associative birepresentations.
Really, for the associtive birepresentation we have
\begin{equation*}
%\label{assoc2}
S_gS_hS_k=S_{gh\cdot k}=S_{g\cdot h k},\quad
T_gT_hT_k=S_{gh\cdot k}=T_{g\cdot h k}
\end{equation*}
from which it follows that $(gh\cdot k)^{-1}(g\cdot h k)\in\Ker(S,T)$. But for the faithful birepresentation $\Ker(S,T)=\{e\}$, hence $g\cdot h k=g\cdot h k$.

Denote the commutator of transformations $A,B$ by $[A,B]\=ABA^{-1}B^{-1}$.
Equivalence of the associtivity constraints (\ref{assoc1}) can be also seen from 
\begin{thm}[minimality conditions]
The Moufang transformations satisfy relations
\begin{equation}
\label{mini1}
[T_h,S^{-1}_g]
\overset{(a)}{=}S^{-1}_{gh}S_gS_h
\overset{(b)}{=}T_{gh}T^{-1}_gT^{-1}_h
\overset{(c)}{=}[S^{-1}_g,T_h]
\overset{(d)}{=}S^{-1}_gS^{-1}_hS_{hg}
\overset{(e)}{=}T_gT_hT^{-1}_{hg}
\end{equation}
\end{thm}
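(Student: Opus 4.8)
The plan is to treat each of the six expressions in (\ref{mini1}) as a \emph{defect} measuring the failure of $S$ or $T$ to be multiplicative, and to reduce every one of them—up to inversion—to a commutator of two Moufang transformations. The raw material is the solved form of the translated generators: from (\ref{bir_def}b) one reads $S_{gh}=S_gT_gS_hT_g^{-1}$, from (\ref{bir_def}c) one reads $T_{hg}=S_gT_gT_hS_g^{-1}$, and the mirror expressions for $S_{hg}$ and $T_{gh}$ come from the equivalent relations (\ref{bir_def2}b) and (\ref{bir_def2}c). I would substitute these, one at a time, into the right-hand members of (\ref{mini1}).

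First I would carry out the reductions that use \emph{no} Moufang content. Substituting $S_{gh}^{-1}=T_gS_h^{-1}T_g^{-1}S_g^{-1}$ into the member built from $S_{gh}$ cancels the adjacent pair $S_g^{-1}S_g$, and after (\ref{ST_comm}) is used to interchange the diagonal factors $S_gT_g=T_gS_g$ the whole expression collapses to a single commutator of two generators; the member built from $T_{hg}$ reduces to that \emph{same} commutator once (\ref{ST_comm}) is applied again, via $T_gS_gT_g^{-1}=S_g$, together with (\ref{ST_inverses}). Symmetrically, the members built from $T_{gh}$ and from $S_{hg}$, together with the commutator already displayed on the right of (\ref{mini1}), all collapse to a \emph{second} commutator, now using (\ref{bir_def2}b,c) and (\ref{ST_comm}). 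These steps invoke only the defining relations, the commutativity (\ref{ST_comm}) and the inverse rules (\ref{ST_inverses}), which is exactly why the two groups of terms close up internally without any reference to (\ref{ST_comm2}).

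The genuine work is the \emph{bridge} that identifies the two commutators with one another. After the reductions above, the chain (\ref{mini1}) comes down to the single clean identity
\begin{equation*}
S_gS_hT_hT_g=S_{gh}T_{gh},
\end{equation*}
since the first commutator equals $S_{gh}^{-1}S_gS_h$ and the second equals $T_{gh}T_g^{-1}T_h^{-1}$, and these agree precisely when the displayed equation holds. This is the step at which associativity would be used in the group case, so for a general Moufang loop it cannot follow from (\ref{ST_comm}) alone; it must draw on (\ref{ST_comm2}), which carries the Moufang content. I would prove it by sliding the $S$-block past the $T$-block by means of $S_gS_hT_hT_g=T_hT_gS_gS_h$ and then absorbing the residual diagonal factors with (\ref{ST_comm}). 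I expect this ordering-exchange to be the main obstacle: the indices and the direction of each conjugation must be arranged so that the single application of (\ref{ST_comm2}) lands exactly on the mismatched $S$- and $T$-blocks, after which the identity closes and all members of (\ref{mini1}) are seen to coincide.
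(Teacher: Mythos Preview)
Your route is genuinely different from the paper's. The paper does not manipulate the six members directly; it simply records that each of the five equalities in (\ref{mini1}) is a literal rewriting of one of the already-established triality identities (\ref{bir_def4}a--c), (\ref{bir_def5}a--c), all of which involve the auxiliary transformation $P_g=S_g^{-1}T_g^{-1}$. So the paper's argument is a one-line reduction to the triality section, whereas you stay entirely inside the $S,T$ calculus and isolate (\ref{ST_comm2}) as the single Moufang ingredient. Your approach makes transparent exactly where non-associativity enters; the paper's buys brevity by cashing in the earlier $P$-machinery.

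Two concrete points about your sketch. First, the commutator to which the $S_{gh}$-member actually collapses is $[T_g,S_h^{-1}]=T_gS_h^{-1}T_g^{-1}S_h$, with the indices \emph{swapped} relative to the leftmost term $[T_h,S_g^{-1}]$ printed in the chain; the paper's own Corollary immediately afterwards restates the result with $[T_g,S_h^{-1}]$, so this is almost certainly a misprint in the theorem, but you should say explicitly which commutator you are obtaining rather than leave it implicit. Second, your bridge $S_gS_hT_hT_g=S_{gh}T_{gh}$ does not close with (\ref{ST_comm}) alone after the slide by (\ref{ST_comm2}): from $T_hT_gS_gS_h$ one uses (\ref{bir_def}b) together with (\ref{ST_comm}) to reach $T_hS_{gh}T_g$, and then a \emph{further} instance of (\ref{bir_def}c), applied with first argument $gh$ and second argument $g^{-1}$ (using $g^{-1}(gh)=h$), is what yields $T_hS_{gh}T_g=S_{gh}T_{gh}$. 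So the defining relations must re-enter the bridge; ``absorbing the residual diagonal factors with (\ref{ST_comm})'' is not enough by itself to finish.
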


\begin{proof}
It is easy to check that 
(\ref{mini1}a)$\equiv$(\ref{bir_def4}a),
(\ref{mini1}b)$\equiv$(\ref{bir_def4}c),
(\ref{mini1}c)$\equiv$(\ref{bir_def4}b),
(\ref{mini1}e)$\equiv$(\ref{bir_def4}c) 
and
(\ref{mini1}a)$\equiv$(\ref{bir_def4}a)
Note that other possible equalities from  (\ref{mini1}a--e) give rise also (\ref{bir_def4}a--c), (\ref{bir_def5}a--c) or the triple closure relations (\ref{triple_closure}a--c).
\end{proof}

\begin{defn}[associators]
Let $(S,T)$ be a birepresentation of the Moufang loop $G$. Elements from group $\overline{\E_G(S,T)}$ of form
\begin{align*}
S(g;h)&\=S^{-1}_{gh}S_gS_h\\
T(g;h)&\=T_{gh}T^{-1}_gT^{-1}_h\\
[T_g,S^{-1}_h]&\=T_gS^{-1}_hT^{-1}_gS_h\\
[S^{-1}_g,T_h]&\=S^{-1}_gT_hS_gT^{-1}_h
\end{align*}
are called \emph{associators} of birepresentation $(S,T)$.
\end{defn}

It is easy to see from Theorem \ref{mini1}:
\begin{cor}[minimality conditions]
Associator of a birepresentation $(S,T)$ satisfy the minimality conditions
\begin{equation}
\label{mini2}
[T_g,S^{-1}_h]
=S(g;h)
=T(g;h)
=[S^{-1}_g,T_h]
=S^{-1}(h;g)
=T^{-1}(h;g)
\end{equation}
\end{cor}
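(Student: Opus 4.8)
The plan is to obtain (\ref{mini2}) as a mere re-reading of the equality chain (\ref{mini1}) in the associator notation of the preceding definition. Since (\ref{mini1}) already asserts that six elements of $\overline{\E_G(S,T)}$ coincide, I would show that each of the six entries of (\ref{mini2}) is one of those six elements rewritten, so that no new relation has to be derived: only the definitions of $S(g;h),T(g;h),[S^{-1}_g,T_h],[T_g,S^{-1}_h]$ and the fact that inversion reverses products are needed.

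First I would dispose of the five entries other than the leading commutator. By definition $S(g;h)=S^{-1}_{gh}S_gS_h$ and $T(g;h)=T_{gh}T^{-1}_gT^{-1}_h$ are the second and third expressions in (\ref{mini1}), and $[S^{-1}_g,T_h]$ occurs verbatim as the fourth. For the two inverse associators I would apply $(XYZ)^{-1}=Z^{-1}Y^{-1}X^{-1}$ together with $(S^{-1}_{hg})^{-1}=S_{hg}$ to get $S^{-1}(h;g)=(S^{-1}_{hg}S_hS_g)^{-1}=S^{-1}_gS^{-1}_hS_{hg}$, which is precisely the fifth expression in (\ref{mini1}), and likewise $T^{-1}(h;g)=(T_{hg}T^{-1}_hT^{-1}_g)^{-1}=T_gT_hT^{-1}_{hg}$, the sixth. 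Thus these five entries of (\ref{mini2}) are already identified with expressions two through six of (\ref{mini1}) and are therefore mutually equal.

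It remains to attach the leading entry $[T_g,S^{-1}_h]$ to this common value, and this is the step I expect to be the only real work. I would invoke (\ref{mini1}) once more after interchanging $g$ and $h$, and combine it with the elementary group-commutator identity $[A,B]^{-1}=[B,A]$: the interchange turns the fourth expression $[S^{-1}_g,T_h]$ into $[S^{-1}_h,T_g]$ and identifies it with $S(h;g)$, whence $[T_g,S^{-1}_h]=[S^{-1}_h,T_g]^{-1}=S(h;g)^{-1}=S^{-1}(h;g)$, an expression already lying in the chain. The main obstacle throughout is simply the bookkeeping of the order of the arguments $g,h$: the $S$- and $T$-associators appear with $(g;h)$ in two slots but with the arguments swapped and inverted in two others, mirroring how $g$ and $h$ are distributed across the two halves of (\ref{mini1}). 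Once this matching of arguments is fixed, the corollary is a transcription of the theorem and requires no further loop-theoretic input.
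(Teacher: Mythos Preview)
Your proposal is correct and follows the paper's own approach, which is nothing more than ``It is easy to see from Theorem~\ref{mini1}'': the corollary is a transcription of (\ref{mini1}) into the associator notation just introduced. You in fact do slightly \emph{more} than the paper, since you notice that the leading entry of (\ref{mini2}) is $[T_g,S^{-1}_h]$ while the leading entry of (\ref{mini1}) is $[T_h,S^{-1}_g]$, and you close this gap cleanly via the swap $g\leftrightarrow h$ together with $[A,B]^{-1}=[B,A]$ to land on $S^{-1}(h;g)$, already in the chain.
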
 

\begin{rem}
For associative Moufang transformations we have
\begin{equation}
\label{assoc2}
[T_g,S^{-1}_h]
=S(g;h)
=T(g;h)
=[S^{-1}_g,T_h]
=S^{-1}(h;g)
=T^{-1}(h;g)=\1
\end{equation}
Comparing (\ref{mini2}) and  (\ref{assoc2}) one can say that the Moufang transformations have the property that their associativity is spoiled in the \emph{minimal} way. Constraints  (\ref{mini1}) and  (\ref{mini2}) are hence called the \emph{minimality conditions}
\end{rem}

By triality we can propose
\begin{thm}[triality and minimality]
The Moufang transformations satisfy the minimality conditions:
\begin{gather}
\label{mini3}
[P_h,T^{-1}_g]
\overset{(a)}{=}T^{-1}_{gh}T_gT_h
\overset{(b)}{=}P_{gh}P^{-1}_gP^{-1}_h
\overset{(c)}{=}[T^{-1}_g,T_h]
\overset{(d)}{=}T^{-1}_gT^{-1}_hT_{hg}
\overset{(e)}{=}P_gP_hP^{-1}_{hg}\\
\label{mini4}
[S_h,P^{-1}_g]
\overset{(a)}{=}P^{-1}_{gh}P_gP_h
\overset{(b)}{=}S_{gh}S^{-1}_gS^{-1}_h
\overset{(c)}{=}[P^{-1}_g,P_h]
\overset{(d)}{=}P^{-1}_gP^{-1}_hP_{hg}
\overset{(e)}{=}S_gS_hS^{-1}_{hg}
\end{gather}
\end{thm}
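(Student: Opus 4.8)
The plan is to obtain (\ref{mini3}) and (\ref{mini4}) for free from the already-established minimality conditions (\ref{mini1}), rather than re-deriving them from the birepresentation axioms. The key observation is that (\ref{mini1}) is an algebraic consequence of the defining relations of the Moufang transformations, and the principle of triality asserts that every such consequence is invariant under the triality group generated by $\tau$ and $\rho$. Hence applying the rotations $\rho$ and $\rho^2$ to the chain (\ref{mini1}) must again yield valid identities among the $S,T,P$, and these images are precisely the two chains to be proved.

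Concretely, first I would apply $\rho=(S\to T\to P\to S)$ termwise to the six members of (\ref{mini1}). Since $\rho$ sends $S\mapsto T$ and $T\mapsto P$ while commuting with inversion and leaving the subscripts $g,h,gh,hg$ and the left-to-right order of factors untouched, the commutator $[T_h,S^{-1}_g]$ is carried to $[P_h,T^{-1}_g]$, the triple products $S^{-1}_{gh}S_gS_h$ and $T_{gh}T^{-1}_gT^{-1}_h$ to $T^{-1}_{gh}T_gT_h$ and $P_{gh}P^{-1}_gP^{-1}_h$, the terms $S^{-1}_gS^{-1}_hS_{hg}$ and $T_gT_hT^{-1}_{hg}$ to $T^{-1}_gT^{-1}_hT_{hg}$ and $P_gP_hP^{-1}_{hg}$, and the commutator $[S^{-1}_g,T_h]$ to $[T^{-1}_g,P_h]$. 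Collecting these reproduces the chain (\ref{mini3}). Second, applying the opposite rotation $\rho^2=(S\to P\to T\to S)$ to the same chain (\ref{mini1}) sends $[T_h,S^{-1}_g]\mapsto[S_h,P^{-1}_g]$, $S^{-1}_{gh}S_gS_h\mapsto P^{-1}_{gh}P_gP_h$, $T_{gh}T^{-1}_gT^{-1}_h\mapsto S_{gh}S^{-1}_gS^{-1}_h$, and so on, producing the chain (\ref{mini4}). Thus each of the two assertions is a single application of a triality rotation to a known identity.

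The only delicate point, and hence where I would slow down, is the bookkeeping of inverses: the rotations $\rho,\rho^2$ merely permute $S,T,P$ and commute with inversion, whereas the reflection $\tau$ and its composites introduce explicit inverses (as in $P\to P^{-1}$). One must therefore confirm that it is the pure rotations, and not a reflected version, that carry (\ref{mini1}) onto the displayed right-hand sides, matching each inverse sign ($S^{-1}$ versus $S$, $P^{-1}$ versus $P$) and the orientation of each bracket $[\cdot,\cdot]$. Once the anchoring entries are matched in this way -- in particular the commutator images $[T^{-1}_g,P_h]$ and $[P^{-1}_g,S_h]$ in the respective (c) positions -- the remaining equalities in each chain follow automatically, because they are already encoded as equalities in (\ref{mini1}) and triality preserves equalities. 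No genuinely new computation beyond this verification is required.
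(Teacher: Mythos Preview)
Your proposal is correct and essentially the same as the paper's one-line argument: the paper simply observes that $(T,P)$ and $(P,S)$ are birepresentations of $G$, which is precisely the content of the triality rotations $\rho$ and $\rho^{2}$ you apply to (\ref{mini1}). Your remark that the (c) entries should read $[T^{-1}_g,P_h]$ and $[P^{-1}_g,S_h]$ is also right --- the printed $[T^{-1}_g,T_h]$ and $[P^{-1}_g,P_h]$ are evidently typos in the statement.
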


\begin{proof}
Constraints (\ref{mini3}a--e) and  (\ref{mini4}a--e) hold because $(T,P)$ and $(P,S)$ are birepresentations of $G$.
\end{proof}

\section{Theorem on kernel of birepresentation}

\begin{defn}[normal divisor \cite{Bruck}] 
A subloop $N$ of the Moufang loop $G$ is called a \emph{normal divisor} of $G$ if it is invariant with respect to the following trensformations of $\X$ from the group $\overline{\E_G(S,T)}$:
\begin{equation*}
L(g;h)\=L^{-1}_{gh}L_gL_h,\quad
M^+_g\=R_gL^{-1}_g
\end{equation*}
If $L(g;h)=\1$ for all $g,h$ in $G$, the Moufang loop $G$ is a group and then every $M^+_g$ ($g\in G$) is an \emph{innner automorphism} of $G$ 
\end{defn}

\begin{thm}
The kernel $\Ker(S,T)$ of a birepresentation $(S,T)$ of the Moufang loop $G$ is a normal divisor of $G$.
\end{thm}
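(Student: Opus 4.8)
The plan is to lean on Theorem~\ref{ST_properties}, which already supplies that $K=\Ker(S,T)$ is a subloop of $G$; what remains is to verify that $K$ is invariant under the two families of transformations $L(g;h)=L^{-1}_{gh}L_gL_h$ and $M^+_g=R_gL^{-1}_g$ that appear in the definition of a normal divisor. Evaluating these on an element $n\in K$ and using $gh=L_gh=R_hg$, the two invariance requirements read $(gh)^{-1}(g(hn))\in K$ and $(g^{-1}n)g\in K$. Since membership in $K$ means exactly that both $S$ and $T$ of the element are $\1$, the whole statement reduces to computing the Moufang transformations of these two products.

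First I would isolate the following absorption lemma: for every $n\in K$ and every $h\in G$,
\[
S_{hn}=S_{nh}=S_h,\qquad T_{hn}=T_{nh}=T_h .
\]
This is where the minimality conditions do the work. Because $S_n=T_n=\1$, every associator carrying $n$ in one of its slots collapses: from (\ref{mini2}) one gets $S(h;n)=[T_h,S^{-1}_n]=\1$ and, via $S(n;h)=S^{-1}(h;n)$, also $S(n;h)=\1$. Expanding $S(h;n)=S^{-1}_{hn}S_hS_n$ and $S(n;h)=S^{-1}_{nh}S_nS_h$ then yields $S_{hn}=S_{nh}=S_h$, and the identical argument with the $T$-associators gives the corresponding statements for $T$. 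Thus multiplying by a kernel element on either side leaves $S$ and $T$ unchanged.

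With the lemma in hand, the condition $L(g;h)(n)\in K$ follows cleanly from part~2) of Theorem~\ref{ST_properties}. Using the product formula $S_{ab}=S_aS_bS(a;b)^{-1}$ together with $S_{hn}=S_h$, I would compute $S(g;hn)=[T_g,S^{-1}_{hn}]=[T_g,S^{-1}_h]=S(g;h)$, whence $S_{g(hn)}=S_gS_hS(g;h)^{-1}=S_{gh}$, and likewise $T_{g(hn)}=T_{gh}$. Applying part~2) to the pair $gh,\;g(hn)$ then gives $S_{(gh)^{-1}(g(hn))}=\1$ and $T_{(gh)^{-1}(g(hn))}=\1$, that is, $L(g;h)(n)\in K$.

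For the remaining condition $M^+_g(n)=(g^{-1}n)g\in K$ I would argue directly. Put $a=g^{-1}n$; the lemma gives $S_a=S^{-1}_g$ and $T_a=T^{-1}_g$. The product formula yields $S_{(g^{-1}n)g}=S_{ag}=S(a;g)^{-1}$ and $T_{(g^{-1}n)g}=T_{ag}=T(a;g)$, and by (\ref{mini2}) these associators coincide, $S(a;g)=T(a;g)=[T_a,S^{-1}_g]=[T^{-1}_g,S^{-1}_g]$, a commutator that vanishes because $S_g$ and $T_g$ commute (\ref{ST_comm}). Hence $S_{(g^{-1}n)g}=T_{(g^{-1}n)g}=\1$, so $M^+_g(n)\in K$, and together with the subloop property $K$ is a normal divisor. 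I expect the main obstacle to be purely organizational: since the birepresentation is genuinely non-associative one may not distribute $S$ and $T$ over products, so every product must be unwound through its associator, and the collapse of those associators hinges on tracking that a kernel element occupies the correct slot. Once the absorption lemma is extracted, the two normal-divisor conditions fall out, the first through Theorem~\ref{ST_properties}.2) and the second through the $S$--$T$ commutativity (\ref{ST_comm}).
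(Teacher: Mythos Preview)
Your argument is correct, and it follows a different route from the paper's. The paper works directly with the quadratic transformations $P_g=S_g^{-1}T_g^{-1}$ and the triality relations (\ref{bir_def4}), (\ref{bir_def5}): it expands $S_{M^+_gk}$ and $S_{L(g;h)k}$ by peeling off one translation at a time via formulas such as $S_{L_g^{-1}x}=P_gS_xT_g$, and the $L(g;h)$ case collapses at the end through the $P$--$T$ minimality relation (\ref{mini3}b), namely $P_{gh}P_g^{-1}P_h^{-1}\cdot T_h^{-1}T_g^{-1}T_{gh}=\1$. You instead never touch $P$: you first isolate the absorption lemma $S_{hn}=S_{nh}=S_h$, $T_{hn}=T_{nh}=T_h$ for $n\in K$ from the $(S,T)$ minimality conditions (\ref{mini2}), then feed it into Theorem~\ref{ST_properties}\,2) for the $L(g;h)$ invariance and into the commutativity (\ref{ST_comm}) for the $M^+_g$ invariance. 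Your approach is a bit more self-contained (it stays within the original pair $(S,T)$ and does not invoke triality), while the paper's computation is shorter once the triality machinery is in place and makes the structural reason $P_{gh}P_g^{-1}P_h^{-1}=(T_{gh}T_g^{-1}T_h^{-1})^{-1}$ visible. One small caution: your identification $S(h;n)=[T_h,S_n^{-1}]$ relies on the indexing of (\ref{mini2}) rather than (\ref{mini1}), which are not quite consistently labelled in the paper; it does not matter here because with $S_n=T_n=\1$ both commutators $[T_h,S_n^{-1}]$ and $[T_n,S_h^{-1}]$ are trivially $\1$, so the absorption lemma goes through either way.
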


\begin{proof}
We know from Theorem \ref{ST_properties} $\Ker(S,T)$ is a subloop of $G$, thus it is  sufficient to check that for all $g,h$ in $G$ and $k$ in $\Ker(S,T$ we have
\begin{gather}
\label{ker1}
S_{M^{+}_gk}\overset{(a)}{=}\1,\quad 
T_{M^{+}_gk}\overset{(a)}{=}\1\\
\label{ker2}
S_{L(g,h)k}\overset{(a)}{=}\1,\quad 
T_{L(g,h)k}\overset{(a)}{=}\1
\end{gather}
First calculate
\begin{align*}
S_{M^{+}_gk}
&=S_{R_gL^{-1}_gk}\\
&=T^{-1}_gS_{L^{-1}_gk}P^{-1}_g\\
&=T^{-1}_gP_gS_kT_gP^{-1}_g\\
&=T^{-1}_gP_g\1 T_gP^{-1}_g\\
&=T^{-1}_gP_g\,T_gP^{-1}_g\\
&=T^{-1}_gS^{-1}_gP^{-1}_g\\
&=(P_gS_gT_g)^{-1}\\
&=\1
\end{align*}
Condition (\ref{ker1}b) can be checked analogously. Next calculate
\begin{align*}
S_{L(g;h)k}
&=S_{L^{-1}_{gh}L_gL_hk}\\
&=P_{gh}S_{L_gL_hk}T_{gh}\\
&=P_{gh}P^{-1}_gS_{L_hk}T^{-1}_gT_{gh}\\
&=P_{gh}P^{-1}_gP^{-1}_hS_{k}T^{-1}_hT^{-1}_gT_{gh}\\
&=P_{gh}P^{-1}_gP^{-1}_h\1 T^{-1}_hT^{-1}_gT_{gh}\\
&=P_{gh}P^{-1}_gP^{-1}_h\,T^{-1}_hT^{-1}_gT_{gh}\\
\overset{(\ref{mini3}b)}{}&=\1
\end{align*}
Condition (\ref{ker2}b) can be checked analogously.
\end{proof}

\section{Birepresentation of quotient loop $G/\Ker(S,T)$}

Recall some basic facts \cite{Bruck} from theory of the Moufang loops.

Let $N$ be a normal divisor of $(S,T)$. The we can define on the Moufang loop $G$ the \emph{left} (\emph{right}) \emph{equivalence}: for $g,h$ in $G$ we set $g\overset{L}{\sim}h$ ($g\overset{R}{\sim}h$) if $g^{-1}h\in N$ ($hg^{-1}\in N$).  The resulting equivalence classes are called the \emph{left} (\emph{right}) \emph{cosets} with respect to the normal divisor $N$. It turns out \cite{Bruck} that the left and right cosets can be presented as $gN$ and $Ng$, respectively, and coincide: $gN=Ng$. On the set of cosets of $G$ with respect to $N$  we can define multiplication:
\begin{equation*} 
(gN)(hN)\=(gh)N
\end{equation*}
which satisfy all the Moufang loop axioms. The resulting Moufang loop is called the quotient loop with respect to $N$ and is denoted by $G/N$. The unit element of $G/N$ is $K$.

The normal divisors coincide \cite{Bruck} with kernels of homomorphisms.

\begin{thm}
Let $(S,T)$ be a birepresentation of the Moufang loop $G$ and $K\=\Ker(S,T)$ be kernel of the birepresentation $(S,T)$. Then the pair of maps $gK\to S_g$, $gk\to T_g$ is a faithful birepresentation of the quotient Moufang loop $G/K$.
\end{thm}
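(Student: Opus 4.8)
The plan is to check three things for the pair $\bar S_{gK}\=S_g$, $\bar T_{gK}\=T_g$: that it is well defined on cosets, that it satisfies the birepresentation axioms (\ref{bir_def}a--c) in $G/K$, and that it is faithful. The preceding theorem guarantees that $K\=\Ker(S,T)$ is a normal divisor, so $G/K$ with multiplication $(gK)(hK)\=(gh)K$ is a genuine Moufang loop with unit $K$, and these three checks are exactly what the statement requires.

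First I would settle well-definedness, which is the only delicate point. Equality of cosets $gK=hK$ means by definition $g^{-1}h\in K$, i.e. $S_{g^{-1}h}=T_{g^{-1}h}=\1$. By part 2) of Theorem \ref{ST_properties} this is equivalent to $S_g=S_h$ together with $T_g=T_h$, so the transformations assigned to a coset do not depend on the chosen representative. This is precisely the place where the characterization $S_g=S_h\iff S_{g^{-1}h}=\1$ carries the essential weight; everything afterwards is formal.

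Next I would transfer the defining relations. From $eK=K$ and $S_e=T_e=\1$ one gets (\ref{bir_def}a) for the quotient. Writing (\ref{bir_def}b,c) for $(\bar S,\bar T)$ and using $(gK)(hK)=(gh)K$ and $(hK)(gK)=(hg)K$, the relations collapse verbatim to $S_gT_gS_h=S_{gh}T_g$ and $S_gT_gT_h=T_{hg}S_g$, which are exactly (\ref{bir_def}b,c) for $(S,T)$ and hence already hold. Thus $(\bar S,\bar T)$ is a birepresentation of $G/K$.

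Finally I would verify faithfulness directly: if $\bar S_{gK}=\bar T_{gK}=\1$, then $S_g=T_g=\1$, so $g\in K$ and therefore $gK=K$, the unit of $G/K$. Hence $\Ker(\bar S,\bar T)=\{K\}$. No step poses a genuine obstacle; the entire content lies in recognizing that part 2) of Theorem \ref{ST_properties} makes the coset assignment consistent, after which the axioms and faithfulness follow by direct substitution.
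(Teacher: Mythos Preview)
Your proof is correct and follows the same three-step structure as the paper (well-definedness on cosets, verification of the birepresentation axioms, faithfulness). The only cosmetic difference is in the well-definedness step: you invoke part 2) of Theorem~\ref{ST_properties}, while the paper instead writes $k=gn$ with $n\in K$ and computes $S_{gn}=P_g^{-1}S_nT_g^{-1}=P_g^{-1}T_g^{-1}=S_g$ directly from (\ref{bir_def4}a) and (\ref{STP_def}); these are equivalent justifications of the same fact.
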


\begin{proof}
A pair $(S',T')$ of maps $gK\to S'_{gK}$, $gk\to T'_{gK}$ is a birepresentation of $G$ if the following conditions are satisfied:
\begin{subequations}
\label{bir_def_quotient}
\begin{gather}
S'_K=T'_K=\1\\
S'_{gk}T'_{gK}S'_{hK}=S'_{(gK)(hK)}T'_{gK}=S'_{(gh)K}T'_{gK}\\
S'_{gk}T'_{gK}T'_{hk}=T'_{(hK)(gK)}S'_{gK}=T'_{(hg)K}S'_{gK}
\end{gather}
\end{subequations}
Define $S'_{gK}$ and $S'_{gK}$ by the following simple formulae:
\begin{equation*}
S'_{gK}\=S_g,\quad 
T'_{gK}\=T_g,\quad \forall gK\in G/K
\end{equation*}
First of all, note that the definition of $S'_{gK}$ and $S'_{gK}$ does depend on the choice of representatives in coset $gK$. Really, if $k\in\ gK$, then $k=gn$, where $n\in K$. Then we have
\begin{gather*}
S'_{kK}=S_k=S_{gn}=P^{-1}_gS_nT^{-1}_g=P^{-1}_gT^{-1}_g=S_g\\
T'_{kK}=T_k=T_{gn}=T^{-1}_gS_nP^{-1}_g=T^{-1}_gP^{-1}_g=T_g
\end{gather*}
Thus the maps $gK\to S'_{gK}$, $gk\to T'_{gK}$ are defined uniquely. The definig relations of (\ref{bir_def_quotient}b,c) follow from (\ref{bir_def}a--c) and the above definition of $S'_{gK}$ and $'S_{gK}$. This means that the pair of maps
$gK\to S'_{gK}$, $gk\to T'_{gK}$ is a birepresentation of $G/K$. The set 
\begin{equation*}
\Ker(S',T')\=\{gK\in G/K|\, S'_{gK}=T'_{gK}=\1\}
\end{equation*}
is the kernel of birepresentation $(S',T')$. Evidently, $K\in\Ker(S',T')$. If $gK\in\Ker(S',T')$, then it follws from
\begin{equation*}
S'_{gK}=S_g=\1,\quad T'_{gK}=T_g=\1
\end{equation*}
that $g\in K$. Thus, $\Ker(S',T')=\{K\}$, from which it follows that birepresentation $gK\to S'_{gK}\=S_g$, $gK\to T'_{gK}\=T_g$ is faithful.
\end{proof}

\section*{Acknowledgement}

Research was in part supported by the Estonian Science Foundation, Grant 6912.

\bigskip\noindent
Department of Mathematics\\
Tallinn University of Technology\\
Ehitajate tee 5, 19086 Tallinn, Estonia\\ 
E-mail: eugen.paal@ttu.ee

\end{document}